\numberwithin{equation}{section}
\definecolor{webgreen}{rgb}{0,.5,0}
\definecolor{webbrown}{rgb}{.8,0,0}
\definecolor{emphcolor}{rgb}{0.95,0.95,0.95}
\theoremstyle{plain}
  \newtheorem{teor}{Theorem}[section]
  \newtheorem{prop}[teor]{Proposition}
  \newtheorem{lema}[teor]{Lemma}
  \newtheorem{coro}[teor]{Corollary}
\theoremstyle{remark}  
  \newtheorem{rem}[teor]{Remark}
\theoremstyle{plain}
\newtheoremstyle{hyp}{}{}{\itshape}{}{}{}{0pt}{}
\theoremstyle{hyp}
\DeclareMathAlphabet{\mathpzc}{OT1}{pzc}{m}{it}
\newcommand{\BE}{\begin{equation}}
\newcommand{\EE}{\end{equation}}
\newcommand {\BA}{\begin{align}}
\newcommand{\EA}{\end{align}}
\title{{Optimality of threshold strategies for spectrally negative L\'evy processes and a positive terminal value \\ at creeping ruin}}
\author{ Chongrui Zhu \footnote{Corresponding author: chongruizhu5@gmail.com}\\ }
\date{2023.01.04}
\begin{document}	
\maketitle
\begin{abstract}
This paper investigates a dividend optimization problem with a positive creeping-associated terminal value at ruin for spectrally negative L\'evy processes. We consider an insurance company whose surplus process evolves according to a spectrally negative L\'evy process with a Gaussian part and a finite L\'evy measure. Its objective function relates to dividend payments until ruin and a creeping-associated terminal value at ruin. The positive creeping-associated terminal value represents the salvage value or the creeping reward when creeping happens. Owing to formulas from fluctuation theory, the objective considered is represented explicitly. Under certain restrictions on the terminal value and the surplus process, we show that the threshold strategy should be the optimal one over an admissible class with bounded dividend rates. \\
	\noindent{\bf Keywords.} Dividend optimization, a positive terminal value at creeping ruin, fluctuation theory, threshold strategies. \\
\end{abstract}
\section{Introduction}
The classical optimal dividends problem surveys the candidate strategy that maximizes the objective concerning cumulative dividend payments over an admissible class with or without the bound on dividend rates. In recent years, it has become an area that received renewed attention due to the applicability of fluctuation theory to characterize the expected cumulative discounted dividend payments and discounted terminal value when the risk surplus process follows a spectrally one-sided L\'evy process. Note that spectrally one-sided Le\'vy processes include the spectrally negative and positive L\'evy processes. In addition to the reason that the spectrally one-sided L\'evy process is of greater analytical tractability, it becomes a common choice for the risk surplus process since it serves as a natural generalization of the classical compound Poisson process. Within collective risk theory, negative jumps of the latter model policyholders' claims. Many remarkable efforts have been made on this topic, and we do not attempt to give an overall description of various optimality results here but instead refer the interested readers to the representative literature, such as \cite{czarna2014dividend,avram2007optimal,bayraktar2013optimal,bayraktar2014optimal,egami2010solving,noba2018optimal,avanzi2016optimal,li2019optimal,loeffen2008optimality}.

A modified version of the classical dividend optimization problem is to consider an additional terminal value in the objective function. The terminal value term can be understood as the reward or the penalty at some given epoch, depending on whether it is positive or not. A dividend optimization problem with a terminal value at ruin for spectrally negative L\'evy processes was studied in \cite{loeffen2009optimal}, where the admissible dividend strategy is without the ceiling dividend rate. \cite{yin2013optimal} investigated the same topic for spectrally positive L\'evy processes. For the case where the admissible class is with bounded dividend rates, \cite{junca2019optimality} surveyed a constrained dividend optimization problem for spectrally one-sided L\'evy processes. \cite{loeffen2010finetti} researched the optimal dividends problem in the more general framework, considering an extra affine Gerber-Shiu function in the objective. This affine function describes the benefits or penalties relevant to ruin behavior by tracking the deficit level linearly, which motivates our research of creeping behavior, in which a terminal value is bestowed when the deficit level equals zero exactly.  

The terminal value in our problem setting is positive when creeping occurs. Such a set-up has a realistic background. A certain amount of financial compensation is provided at the moment when the random volatility of the risk reserve rather than abrupt claims from policyholders leads to the ultimate bankruptcy. In other words, only when the ruin occurs in a "continuous" and more predictable manner would the salvage value from the insurance company be transferred to the beneficiaries who receive dividend payments as well. This “continuous” transferring corresponds to the case that such storage value is easier to access when the ruin is less severe. The explanation of the positive terminal value as a salvage value can be found in \cite{loeffen2009optimal,RADNER19961373}. As seen, the positive terminal value here ignores the jumps' effect (brutal ruin) of the underlying risk surplus. Alternatively, this represents the preference of potential shareholders as the brutal ruin is even less favored than the creeping one due to the higher uncertainty engendered in the brutal ruin. Moreover, such bias toward creeping decays with the lifetime of the surplus as the latter increasingly compensates for risks of terminating dividend flows unpredictably caused by brutal ruin. Here, the creeping-dependent salvage value/ creeping award setting fits reality more in the sense that the terminal value under consideration allows for the company’s economic situation or the shareholder's belief. 

In this article, the risk surplus process of the insurance company $X=(X_t)_{t\geq 0}$ is assumed to evolve as a spectrally negative L\'evy process. The spectrally negative L\'evy process is the stochastic process that has stationary, independent increments and no positive jumps, and it can be identified with the L\'evy triplet $(\gamma,\sigma,\Pi)$, where $\gamma\in \mathbb{R}$, $\sigma\geq  0$ and the measure $\Pi$ concentrated on $(0,\infty)$ is such that 
\begin{gather*}
    \int_0^{\infty}\left(1\wedge x^2\right)\Pi(dx)<\infty.
\end{gather*}
Let $(\Omega,\mathcal{F},\mathbb{F}=(\mathcal{F}_t)_{t\geq 0},\mathbb{P})$ be the complete filtered probability space satisfying common assumptions, and by $\mathbb{P}_x$ and $\mathbb{E}_x$ denote the probability law and expectation operator conditioned on $\{X_0=x\}$, respectively. For $t,\theta \geq 0$, the Laplace exponent $\psi_X$ of $X$ is given by 
\begin{gather*}
\mathbb{E}_x\left[e^{\theta X_t}\right]
=e^{\theta x+\psi_X(\theta) t},
\end{gather*}
where 
\begin{gather*}
    \psi_X(\theta)=\gamma \theta +\frac{1}{2}\sigma^2\theta^2
    -\int_{(0,\infty)}
    \left(1-e^{-\theta z}-\theta z1_{\{0<z<1\}}\right)\Pi(dz).
\end{gather*}
Note that in the subsequent text the drift of $X$ will be defined as $c=\gamma+\int_{(0,1)}z\Pi(dz)$ if $\int_{(0,1)}z\Pi(dz)<\infty$.

The \emph{dividend strategy} $\pi=(L^{\pi}_t)_{t\geq 0}$ is a non-decreasing, left-continuous, and $\mathbb{F}$-adapted process with $L_{0}=0$, where $L^{\pi}_t$ represents cumulative dividend payments generated by the company up to time $t$. The controlled surplus process $U^{\pi}=(U^{\pi}_t)_{t\geq 0}$ with dividend strategy $\pi$ executed is defined by $U^{\pi}_t=X_t-L_{t}^{\pi}$ for $t\geq 0$. Let $\tau_{\pi}=\inf\{t>0:U_t^{\pi}<0\}$ be the ruin time and the objective with a positive creeping-associated terminal value at ruin for dividend strategy $\pi$ is formulated as 
\begin{gather*}
V_{\pi}^S(x)
=
    \mathbb{E}_x\left[
    \int_{[0,\tau_{\pi}]} 
    e^{-qt}dL_t^{\pi}
    +
    e^{-q\tau_{\pi}}\Lambda_S(U_{\tau_\pi}^{\pi})1_{\{\tau_{\pi}<\infty\}}\right],
    \tag{1.1}
    \label{1.1}
\end{gather*}
where $q>0$ is the discounting rate, $S>0$ is the terminal value, and the related function $\Lambda_S:(-\infty,0]\to \mathbb{R}$ in \eqref{1.1} is defined as
\begin{gather*}
    \Lambda_S(x)=S1_{\{x=0\}},\text{  for all  } x\leq 0.
\end{gather*}
It follows from definition that $V_{\pi}^S(x)=\Lambda_S(x)=0$ for $S>0$ and $x< 0$. In \cite{loeffen2010finetti}, the equivalent of $\Lambda_S$ is an affine function, while here, $\Lambda_S$ vanishes on the non-positive real axis except for the origin. The event $\{U_{\tau_\pi}^\pi =0\}$ is called creeping or creeping downwards. As seen in \eqref{1.1}, $S>0$ represents the salvage value transferred to the beneficiary or the creeping bonus based on the preference of shareholders when creeping happens. The admissible class for absolutely continuous dividend strategy/ dividend strategy with bounded dividend rate $\pi$ is defined by
\begin{gather*}
    \mathcal{D}:=\{\pi=(L_t^{\pi})_{t\geq 0}:
    L^{\pi}_t=\int_0^t l^{\pi}_sds,
    \text{ }l^{\pi}_s\in [0,\delta]
     \text{ for all }s\geq 0
    ,
    \text{  and  }
    l^{\pi}_s=0\text{ for all }
    s\geq \tau_{\pi}
    \},
\end{gather*}
where $\delta>0$ is the maximal admissible dividend rate. The aim is to characterize the optimal dividend strategy $\pi_* \in \mathcal{D} $ $s.t.$
\begin{gather*}
V_{\pi_*}^S(x)=V^S(x):=\underset{\pi \in \mathcal{D}}{\mathrm{sup}}~V_{\pi}^S(x),\quad x\geq 0.
\tag{1.2}
\label{1.2}
\end{gather*}
In other words, we aim to show the optimality of the candidate admissible dividend strategy that maximizes the expected sum of dividend payments until ruin and a creeping-associated terminal value at ruin. Despite the fact that the specific form of risk surplus processes varies, the candidate solution for this type of problem shall be the threshold strategy, which means that any surplus over the threshold level is paid with the maximal admissible dividend rate, while nothing is paid whenever the surplus is under the threshold level, see, for example, \cite{junca2019optimality,kyprianou2012optimal,zhu2014dividend,zhu2015dividend}. We denote the threshold dividend strategy by $\pi_b=(L_t^b)_{t\geq 0}$ here. More precisely, the risk surplus process $U^b=(U^b_t)_{t\geq 0}$ with the threshold strategy executed evolves as
\begin{gather*}
    U_t^b=
    X_t-L_t^b=
    X_t-
    \int_0^t l^{b}_sds,
    \quad 
    \text{and}
    \quad
    l^{b}_s=
    \delta1_{\{U_s^b>b\}}.
\end{gather*}
A comprehensive study on various expected net present values for $U_b$ is shown \cite{kyprianou2010refracted}, where $U_b$ is referred to as the refracted L\'evy process. Furthermore, from Theorem 7 in \cite{kyprianou2010refracted}, spectrally negative L\'evy processes without the Gaussian part creep downward with zero probability. In what follows, the survey is restricted in the case $\sigma>0$ to avoid triviality. For the function $\omega:\mathbb{R}\to\mathbb{R}$, which is twice continuously differentiable on $(0,\infty)$, the infinitesimal generator of $X$ acting on it is given by
\begin{gather*}
    \mathcal{G}\omega(x)
    =\gamma \omega^{\prime}(x)
    +\frac{\sigma^2}{2}\omega^{\prime\prime}(x)
    +\int_{(0,\infty)}
    \left[ 
    \omega(x-y)-\omega(x)+\omega^{\prime}(x)y1_{\{0<y<1\}}\right]\Pi(dy),
    \quad 
    x>0.
\end{gather*}

Interestingly, the research inevitably involves higher-order derivatives of $q$-scale functions, which is the principal analytical tool in this article, and relies on some results of those. To illustrate, since the desired representation of creeping-related quantities in \eqref{1.1} relates to the second-order derivative at the zero point of $q$-scale functions, the need is to provide the corresponding exact value. It can be shown heuristically, and the precise proof is in Remark \ref{rem2.5}. Notably, technical difficulties are required to be solved when verifying the optimality condition. More specifically, the concept given in Proposition \ref{lem2.end}, which says that the $q$-scale function's derivative is the eigen-function of the underlying process, is in need for deducing the proposed optimality condition. It is worth mentioning that such smoothness results demonstrated here might be of independent interest and use in various modeling problems. The smoothness result can lead to the third-order derivative at the origin for $q$-scale functions. By appealing to that value, one can obtain critical consequences regarding the selection of the threshold level, which can be seen in the proof of Lemma \ref{Lem3.4}.

The argument in this article follows a standard "guess-and-verify" procedure. As mentioned before, the candidate policy shall be the threshold dividend strategy and the threshold level is determined according to certain criteria. The assertion is to ensure that such criteria entail the optimality of the threshold strategy over the admissible class, which is to be checked by proving the linked value function is the solution to the Hamilton-Jacobi-Bellman (HJB) equation.

The outline of this article is structured as follows. To start with, we introduce some key identities on $q$-scale functions from fluctuation theory for spectrally negative L\'evy processes in Section \ref{S.2}. After that, in Section \ref{S.3}, we present the condition that can verify the optimality of the candidate threshold strategy and proof that it is indeed valid. A numerical example and the conclusion will also be given in Section \ref{S.4} and Section \ref{S.5}, respectively. The technical proof would be deferred to the appendix.
\section{Preliminaries on scale functions}
\label{S.2}
The main analytical tool employed in this paper, the $q$-scale function for spectrally negative L\'evy process $X=(X_t)_{t\geq 0}$, $W^{(q)}:\mathbb{R}\to [0,\infty)$ with $q\geq 0$, is the strictly increasing continuous function defined by the Laplace transform as follows:
\begin{gather*}
    \int_0^{\infty}e^{-\theta x}W^{(q)}(x)dx
    =\frac{1}{\psi_X(\theta)-q}, 
    \quad
    \theta>\Phi(q),
    \tag{2.1}
    \label{2.1}
\end{gather*}
where the right inverse of $\psi_X$ is given by $\Phi(q)=\sup\{\theta\geq 0:\psi_X(\theta)=q\}$ and $W^{(q)}(x)=0$ for $x<0$. Similarly, the $q$-scale function for the surplus process perturbed by the ceiling dividend rate $Y=(X_t-\delta t)_{t\geq 0}$ is denoted by $\mathbf{W}^{(q)}$, and the right inverse of the Laplace exponent for $Y$ is given by
\begin{gather*}
    \phi(q)=\sup\{ 
    \theta\geq 0:\psi_Y(\theta)=\psi_X(\theta)-\delta \theta =q
    \}.
    \tag{2.2}
    \label{2.2}
\end{gather*}
In particular, $\phi(q)>\Phi(q)>0$ holds based on the strict convexity of the function $\psi_X$. The associated functions ${\overline{W}^{(q)}}:\mathbb{R}\to[0,\infty)$ and $\overline{\mathbf{W}}^{(q)}:\mathbb{R}\to [0,\infty)$ are given by 
\begin{gather*}
\overline{W}^{(q)}(x)=\int_0^x{W}^{(q)}(y)dy,\quad
    \overline{\mathbf{W}}^{(q)}(x)=\int_0^x{\mathbf{W}}^{(q)}(y)dy.
\end{gather*}
Drawing upon the proof of Theorem 4 and 5 in \cite{kyprianou2010refracted}, we have, for $x\in \mathbb{R}$,
\begin{gather*}
    \delta \int_0^x \mathbf{W}^{(q)}(x-y)W^{(q)}(y)dy
    =\overline{\mathbf{W}}^{(q)}(x)
    -\overline{W}^{(q)}(x),
    \tag{2.3}
    \label{2.3}
\end{gather*}
which is an immediate result of the observation that the Laplace transform for both sides of \eqref{2.3} is equal. The limit relation as follows holds, 
\begin{gather*}
\lim_{x\to \infty}
    e^{-\Phi(q)x}W^{(q)}(x)= \frac{1}{\psi_X^{\prime}(\Phi(q))},
    \quad
\lim_{x\to\infty}
     e^{-\phi(q)x}\mathbf{W}^{(q)}(x)=\frac{1}{\psi^{\prime}_Y(\phi(q))}= \frac{1}{\psi_X^{\prime}(\phi(q))-\delta},
     \tag{2.4}
     \label{2.4}
\end{gather*}
which can be found in Equation 3.7 of \cite{junca2019optimality}. Next, we will review some conclusions in \cite{loeffen2009optimal}, \cite{junca2019optimality}, and \cite{kyprianou2012optimal}.
\begin{lema}
\label{lem2.0}
Assume that the L\'evy measure $\Pi$ associated with $X$ has a completely monotone density, i.e., there is a density $\rho$ of the measure $\Pi$, which is infinitely differentiable on $(0,\infty)$ and such that $$(-1)^n\rho^{(n)}(x)\geq 0,$$ for arbitrary positive integer $n$ and $x> 0$. Then $\mathbf{W}^{(q)}$, the $q$-scale function of $Y$, can be expressed as
\begin{gather*}
    \mathbf{W}^{(q)}(x)
    =\phi^{\prime}(q)e^{\phi(q)x}-f(x),
    \quad x>0,
    \tag{2.5}
    \label{2.5}
\end{gather*}
where the function $f$ is a completely monotone function taking the form: $$f(x)=\int_{(0,\infty)}e^{-xt}\xi(dt),$$ in which $\xi$ is a bounded measure on $(0,\infty)$.
\end{lema}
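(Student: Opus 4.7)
The strategy is to reduce the claim to a well-known completely monotone representation theorem for $q$-scale functions of spectrally negative L\'evy processes, and then identify the exponential coefficient by a short calculus computation linking $\phi'(q)$ and $\psi_Y'(\phi(q))$.

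First I would note that $Y_t=X_t-\delta t$ is itself a spectrally negative L\'evy process, obtained from $X$ by subtracting a deterministic drift. Its L\'evy triplet is $(\gamma-\delta,\sigma,\Pi)$, so in particular the L\'evy measures of $X$ and $Y$ coincide. Consequently, the completely monotone density hypothesis on $\Pi$ transfers verbatim from $X$ to $Y$, placing $\mathbf{W}^{(q)}$ within the class covered by the standard structural result for scale functions with completely monotone L\'evy density, as used in \cite{loeffen2009optimal,junca2019optimality,kyprianou2012optimal}.

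Applying that result to $Y$ gives a decomposition
\begin{gather*}
\mathbf{W}^{(q)}(x)=\frac{e^{\phi(q)x}}{\psi_Y^{\prime}(\phi(q))}-f(x),\quad x>0,
\end{gather*}
with $f$ completely monotone and representable as the Laplace transform of a bounded positive measure $\xi$ on $(0,\infty)$. To match \eqref{2.5} it then suffices to verify $\phi^{\prime}(q)=1/\psi_Y^{\prime}(\phi(q))$, which comes from implicit differentiation of the defining identity $\psi_Y(\phi(q))=q$; this is legitimate since $\psi_Y$ is strictly convex and smooth on $(\phi(0),\infty)$ and $\psi_Y^{\prime}(\phi(q))=\psi_X^{\prime}(\phi(q))-\delta>0$ by \eqref{2.4}.

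The main obstacle is the cited representation theorem itself. I would sketch its proof by starting from $\int_0^{\infty}e^{-\theta x}\mathbf{W}^{(q)}(x)\,dx=1/(\psi_Y(\theta)-q)$ for $\theta>\phi(q)$, subtracting off the simple pole at $\theta=\phi(q)$ with residue $1/\psi_Y^{\prime}(\phi(q))$, and identifying the remainder as the Laplace transform of a completely monotone function. The delicate point is to argue that, under the completely monotone density assumption on $\Pi$, the function $\theta\mapsto(\psi_Y(\theta)-q)^{-1}$ lies in the Stieltjes class once this pole is removed; Bernstein's theorem then delivers the representation $f(x)=\int_{(0,\infty)}e^{-xt}\xi(dt)$. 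Finiteness of $\xi((0,\infty))$ is finally read off from the asymptotic $e^{-\phi(q)x}\mathbf{W}^{(q)}(x)\to 1/\psi_Y^{\prime}(\phi(q))$ in \eqref{2.4}, which forces the Laplace transform of $f$ to decay sufficiently fast at infinity for $\xi$ to be a bounded measure.
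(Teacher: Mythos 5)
Your proposal follows essentially the same route as the paper, which offers no proof of its own: Remark \ref{FiN.rem.l} simply declares Lemma \ref{lem2.0} to be a restatement of Lemma 4.17 in \cite{junca2019optimality}, i.e.\ the standard completely-monotone representation of scale functions applied to the drift-perturbed process $Y$. Your two reduction steps --- that $Y$ inherits the completely monotone L\'evy density because $X$ and $Y$ share the measure $\Pi$, and that the coefficient $1/\psi_Y^{\prime}(\phi(q))$ equals $\phi^{\prime}(q)$ by implicit differentiation of $\psi_Y(\phi(q))=q$ (legitimate since $\psi_Y^{\prime}(\phi(q))>0$ for $q>0$) --- are exactly what is needed to pass from the cited theorem to the form \eqref{2.5}, and your pole-subtraction/Bernstein sketch is the correct outline of how that theorem is proved in \cite{loeffen2009optimal}.

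One detail in your sketch is off: the boundedness of $\xi$ does not follow from the decay of $f$ at infinity. A Laplace transform of an \emph{unbounded} measure on $(0,\infty)$ (e.g.\ Lebesgue measure, giving $f(x)=1/x$) also vanishes as $x\to\infty$, and the asymptotic \eqref{2.4} only tells you $e^{-\phi(q)x}f(x)\to 0$, which is automatic. The correct argument reads the total mass off at the origin: by monotone convergence $\xi\bigl((0,\infty)\bigr)=f(0+)=\phi^{\prime}(q)-\mathbf{W}^{(q)}(0+)$, which is finite (indeed equal to $\phi^{\prime}(q)$, since $\mathbf{W}^{(q)}(0+)=0$ when $\sigma>0$). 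With that repair your argument is complete.
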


\begin{lema}
\label{lem2.1}
Under the prerequisite given in Lemma \ref{lem2.0}, the $q$-scale function $W^{(q)}$ is infinitely differentiable on $(0,\infty)$, and its first derivative is strictly log-convex and thereby also convex on $(0,\infty)$. Furthermore, $W^{(q)\prime}$ decreases on $(0,a^*)$ and increases on $(a^*,\infty)$, where $a^*\geq 0$ is defined as the largest point at which the function $W^{(q)\prime}$ attains its global infimum and satisfies that $a^*<\infty$.
\end{lema}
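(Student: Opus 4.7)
The plan is to piggyback on Lemma \ref{lem2.0}. Setting $\delta=0$ collapses the refracted process $Y$ back to $X$ and $\phi(q)$ to $\Phi(q)$, and the argument behind Lemma \ref{lem2.0} (originating from \cite{loeffen2009optimal}) carries over verbatim to yield the decomposition
\begin{gather*}
W^{(q)}(x)=\Phi'(q)\,e^{\Phi(q)x}-g(x),\qquad x>0,
\end{gather*}
where $g(x)=\int_{(0,\infty)}e^{-xt}\mu(dt)$ is a completely monotone function with $\mu$ a bounded positive measure on $(0,\infty)$. Infinite differentiability of $W^{(q)}$ on $(0,\infty)$ is then immediate: the exponential is $C^{\infty}$, and the Laplace transform of a bounded measure supported away from $0$ is $C^{\infty}$ on $(0,\infty)$, with derivatives obtained by differentiating under the integral sign.

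Differentiating the decomposition gives
\begin{gather*}
W^{(q)\prime}(x)=\Phi(q)\Phi'(q)\,e^{\Phi(q)x}+h(x),\qquad h(x):=-g'(x)=\int_{(0,\infty)}t\,e^{-xt}\mu(dt),
\end{gather*}
so $h$ is itself completely monotone, being the Laplace transform of the positive measure $t\,\mu(dt)$. The plan for log-convexity is to use that Laplace transforms of positive measures are strictly log-convex (a one-line consequence of H\"older's inequality applied to $e^{-(\alpha x+(1-\alpha)y)t}=e^{-\alpha xt}\,e^{-(1-\alpha)yt}$), while $e^{\Phi(q)x}$ is log-linear, hence log-convex. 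Log-convex functions are closed under addition (again by H\"older, via $a^{\alpha}b^{1-\alpha}+c^{\alpha}d^{1-\alpha}\le(a+c)^{\alpha}(b+d)^{1-\alpha}$), and when one summand is strictly log-convex the sum is strictly log-convex; so $W^{(q)\prime}$ is strictly log-convex, and in particular strictly convex.

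For the monotonicity picture, I would note that $h(x)\le\|\mu\|/(ex)\to 0$ as $x\to\infty$, while $\Phi(q)\Phi'(q)\,e^{\Phi(q)x}\to\infty$; hence $W^{(q)\prime}(x)\to\infty$. Continuity of $W^{(q)\prime}$ on $[0,\infty)$ (inherited from the decomposition) together with strict convexity then forces a unique minimizer $a^{*}\in[0,\infty)$, with $W^{(q)\prime}$ strictly decreasing on $(0,a^{*})$ and strictly increasing on $(a^{*},\infty)$; the "largest point" phrasing in the statement collapses to "the unique point" because of strict convexity. The main obstacle I anticipate is justifying the opening displayed decomposition: Lemma \ref{lem2.0} is stated for the refracted process $Y$, so one must verify that its proof does not genuinely use $\delta>0$ and transplants cleanly to $X$ itself. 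Once that transplantation is checked, everything else reduces to standard Laplace-transform/completely-monotone manipulations.
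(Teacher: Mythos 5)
Your route is essentially the one the paper itself relies on: no proof is given in the text, and Remark \ref{FiN.rem.l} imports the lemma from Loeffen (2009) and Kyprianou--Loeffen--P\'erez (2012), whose argument is precisely your decomposition $W^{(q)\prime}(x)=\Phi(q)\Phi'(q)e^{\Phi(q)x}+\int_{(0,\infty)}t\,e^{-xt}\mu(dt)$ followed by closure of (strict) log-convexity under sums; the ``transplantation'' from $Y$ to $X$ that worries you is a non-issue, since the cited result holds for any spectrally negative L\'evy process with completely monotone jump density and is applied to $Y$ only because $Y$ shares the jump measure of $X$ (again see Remark \ref{FiN.rem.l}). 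One small patch: the Laplace transform of a positive measure is \emph{strictly} log-convex only when the measure is not a Dirac mass (for Brownian motion with drift, $\mu$ is a point mass and $h$ is log-linear), but strictness of the conclusion survives because the two-term H\"older inequality $a^{\alpha}b^{1-\alpha}+c^{\alpha}d^{1-\alpha}\le(a+c)^{\alpha}(b+d)^{1-\alpha}$ is strict unless $(a,c)$ and $(b,d)$ are proportional, which fails here since the exponential rates $\Phi(q)>0$ and $-t<0$ differ.
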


\begin{coro}
\label{log-con}
Let the supposition of Lemma \ref{lem2.0} be true. Denote the $n$-th derivative of $W^{(q)}$ by $W^{(q),n}$. Consequently, $W^{(q),n}$ is strictly log-convex on $(0,\infty)$ if $n$ is a positive odd number.
\end{coro}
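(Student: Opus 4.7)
My strategy is to exhibit $W^{(q),n}$, for odd $n$, as an integral of exponentials against a positive measure on $\mathbb{R}$, and then deduce strict log-convexity from a Cauchy--Schwarz argument. The key input is that the representation \eqref{2.5} of Lemma~\ref{lem2.0}, although stated for $\mathbf{W}^{(q)}$, depends only on the completely monotone Lévy density hypothesis, which holds equally for $X$ itself. Applying the same reasoning (formally $\delta=0$) to $X$, one obtains
\begin{gather*}
W^{(q)}(x)=\Phi'(q)\,e^{\Phi(q)x}-f_0(x),\qquad x>0,
\end{gather*}
where $f_0(x)=\int_{(0,\infty)}e^{-xt}\xi_0(dt)$ for some bounded measure $\xi_0$ on $(0,\infty)$. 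Since both summands are infinitely differentiable on $(0,\infty)$, differentiating $n$ times (using dominated convergence for the Laplace-type integral) yields
\begin{gather*}
W^{(q),n}(x)=\Phi'(q)\,\Phi(q)^{n}e^{\Phi(q)x}-(-1)^{n}\int_{(0,\infty)}t^{n}e^{-xt}\xi_0(dt).
\end{gather*}

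For odd $n$, the sign $(-1)^n=-1$ turns both summands positive, so $W^{(q),n}(x)=\int_{\mathbb{R}}e^{\alpha x}\nu_n(d\alpha)$, where $\nu_n$ is the positive measure placing mass $\Phi'(q)\Phi(q)^n$ at the single point $\Phi(q)>0$ and, on the negative half-line, equal to the push-forward of $t^{n}\xi_0(dt)$ under $t\mapsto -t$. For any such Laplace-type integrand $h(x)=\int e^{\alpha x}\nu(d\alpha)$ with a positive measure $\nu$, the Cauchy--Schwarz inequality gives
\begin{gather*}
(h'(x))^{2}=\Bigl(\int\alpha e^{\alpha x}\nu(d\alpha)\Bigr)^{2}\leq \int e^{\alpha x}\nu(d\alpha)\cdot\int\alpha^{2}e^{\alpha x}\nu(d\alpha)=h(x)h''(x),
\end{gather*}
so that $(\log h)''\geq 0$; thus $W^{(q),n}$ is log-convex. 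Strictness of the Cauchy--Schwarz inequality fails only if $\nu$ is concentrated at a single point, which is not the case here: $\nu_n$ carries a positive atom at $\Phi(q)>0$, while the (non-trivial) part coming from $\xi_0$ is supported on $(-\infty,0)$. Nontriviality of $\xi_0$ follows from $W^{(q)}(0+)=0$ (as $\sigma>0$ gives unbounded variation), which forces $f_0(0+)=\Phi'(q)>0$.

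\textbf{Main obstacle.} The nontrivial step is justifying the use of \eqref{2.5} for $W^{(q)}$ itself and rigorously differentiating under the integral sign; once this is in hand, the corollary follows from the elementary Cauchy--Schwarz computation above. The positivity restriction to odd $n$ is essential: for even $n$ the sign in front of the $\xi_0$-integral flips, the two contributions enter with opposite signs, and the argument collapses.
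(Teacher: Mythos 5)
Your proof is correct and follows essentially the same route as the paper's source for this statement (the paper does not prove the corollary itself but attributes it to Corollary 4 of \cite{loeffen2009optimal}, whose argument is exactly this: the exponential-minus-completely-monotone representation of $W^{(q)}$, term-by-term differentiation, and strict log-convexity of a non-degenerate mixture of exponentials via Cauchy--Schwarz). Your observation that strictness needs $\xi_0\neq 0$ is the right caveat, and it is supplied by the paper's standing assumption $\sigma>0$, which forces $W^{(q)}(0+)=0$ and hence $\xi_0((0,\infty))=\Phi'(q)>0$.
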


\begin{rem}
\label{FiN.rem.l}
Lemma \ref{lem2.0} is essentially the restatement of Lemma 4.17 in \cite{junca2019optimality}, Lemma \ref{lem2.1} stems from the discussion on $a^*$ and Lemma 3 in \cite{kyprianou2012optimal}, and Corollary \ref{log-con} comes from Corollary 4 in \cite{loeffen2009optimal}. Although Lemma \ref{lem2.1} concerns $W^{(q)}$, its conclusion applies to $\mathbf{W}^{(q)}$ as well because $X$ and $Y$ share the same jump measure $\Pi$. The smoothness and convexity of $W^{(q)}$ and $\mathbf{W}^{(q)}$ under the precondition of Lemma \ref{lem2.1} shall be used in the sequel without special justification. 
\end{rem}

The identity \eqref{2.7} in Lemma \ref{Lem2.2} below comes from \cite{kuznetsov2012theory}, and another part, \eqref{2.8}, whose rigorous proof has been given in Remark \ref{rem2.5}, relates to the $q$-scale functions' second derivative. As seen in Remark \ref{rem2.5}, the value given in \eqref{2.8} is a consequence of the restated part \eqref{2.7}, which can be checked by switching the limit and integral signs in \eqref{2.9} with $x\to 0^+$ in the equation. For the subsequent formulas concerning only the values of $W^{(q)\prime}$ and $W^{(q)\prime\prime}$ on $[0,\infty)$, $W^{(q)\prime}(0)$ and $W^{(q)\prime\prime}(0)$ therein are understood as the right-hand limit at the origin, respectively. Also, the right derivative notion $W^{(q)\prime}_+$ would occasionally have to be employed as $W^{(q)\prime}(0)$ is not well-defined, and by $W^{(q)\prime\prime}_+$ we denote the right derivative of $W^{(q)\prime}$.

\begin{lema}
\label{Lem2.2}
Let $X$ be a spectrally negative L\'evy process with the L\'evy triplet $(\gamma,\sigma,\Pi)$ s.t.
\begin{gather*}
    \sigma>0\quad \text{and} \quad \int_{(0,1)}z\Pi(dz)<\infty.
    \tag{2.6}
    \label{2.6}
\end{gather*}
Then the $q$-scale function for $X$, $W^{(q)}$, satisfies that 
\begin{gather*}
    W^{(q)}(0)=0,\quad
    W^{(q)\prime }(0+)
    :=
    \lim_{x\to 0^+}W^{(q)\prime }(x)
    =
    \frac{2}{\sigma^2},
    \tag{2.7}
    \label{2.7}
    \\
    W^{(q)\prime\prime}(0+)
    :=
    \lim_{x\to 0^+}W^{(q)\prime\prime }(x)
    =
    -\frac{4c}{\sigma^4}
    .
    \tag{2.8}
    \label{2.8}
\end{gather*}
\end{lema}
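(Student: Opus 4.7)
Since \eqref{2.7} is cited directly from \cite{kuznetsov2012theory}, my plan is to derive \eqref{2.8} from \eqref{2.7} by invoking the integro-differential eigenequation satisfied by the scale function, $(\mathcal{G}-q)W^{(q)}(x)=0$ for $x>0$, and passing to the limit $x\to 0^{+}$. Under the standing assumption $\int_{(0,1)} z\,\Pi(dz)<\infty$ in \eqref{2.6}, the small-jump compensator $\omega^{\prime}(x)y\,1_{\{0<y<1\}}$ in the generator can be absorbed into a genuine drift $c=\gamma+\int_{(0,1)}z\,\Pi(dz)$, so the eigenequation rewrites in the simpler form
$$\frac{\sigma^{2}}{2}W^{(q)\prime\prime}(x) + c\,W^{(q)\prime}(x) + \int_{(0,\infty)}\bigl[W^{(q)}(x-y)-W^{(q)}(x)\bigr]\Pi(dy) = q\,W^{(q)}(x),$$
which I expect to be the identity \eqref{2.9} referenced in the surrounding text.

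Letting $x\to 0^{+}$ and invoking \eqref{2.7}, the right-hand side vanishes because $W^{(q)}(0)=0$, and the drift term converges to $c\cdot (2/\sigma^{2})$. Once the nonlocal term is shown to tend to zero, solving for $W^{(q)\prime\prime}(0+)$ produces exactly $-4c/\sigma^{4}$. The entire proof therefore reduces to justifying
$$\lim_{x\to 0^{+}}\int_{(0,\infty)}\bigl[W^{(q)}(x-y)-W^{(q)}(x)\bigr]\Pi(dy)=0,$$
which is precisely the limit-integral swap that the author flags as the key.

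The main obstacle, in my view, is justifying this interchange. I plan to apply dominated convergence to $g_{x}(y):=W^{(q)}(x-y)-W^{(q)}(x)$. For each fixed $y>0$, once $x<y$ we have $W^{(q)}(x-y)=0$, hence $g_{x}(y)=-W^{(q)}(x)\to 0$ as $x\to 0^{+}$. For a dominator that works uniformly on $x\in(0,1]$, I split according to whether $y\le x$ or $y>x$: on $\{y\le x\}$ the mean value theorem together with the local boundedness of $W^{(q)\prime}$ near the origin (guaranteed by \eqref{2.7} and the smoothness in Lemma \ref{lem2.1}) gives $|g_{x}(y)|\le Ky$; on $\{y>x\}$ directly $|g_{x}(y)|=W^{(q)}(x)\le W^{(q)}(1)$. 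The resulting bound $Ky\,1_{(0,1)}(y)+W^{(q)}(1)\,1_{[1,\infty)}(y)$ is $\Pi$-integrable thanks to $\int_{(0,1)}y\,\Pi(dy)<\infty$ and the automatic $\Pi([1,\infty))<\infty$, closing the argument. As a cross-check that bypasses the generator equation altogether, two integrations by parts in \eqref{2.1} give $\int_{0}^{\infty}e^{-\theta x}W^{(q)\prime\prime}(x)\,dx=\theta^{2}/(\psi_{X}(\theta)-q)-2/\sigma^{2}$, and the initial value theorem combined with the expansion $\psi_{X}(\theta)=\tfrac{\sigma^{2}}{2}\theta^{2}+c\theta+O(1)$ as $\theta\to\infty$ recovers the same leading-order correction $-4c/(\sigma^{4}\theta)$, confirming the value.
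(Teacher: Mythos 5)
Your proof is correct and follows essentially the same route as the paper's Remark~\ref{rem2.5}: pass to the limit $x\to 0^{+}$ in the eigenequation $(\mathcal{G}-q)W^{(q)}=0$ and justify the limit--integral interchange by dominated convergence, using the local boundedness of $W^{(q)\prime}$ near the origin (from \eqref{2.7}) together with $W^{(q)}\equiv 0$ on $(-\infty,0]$. The only cosmetic difference is that you absorb the small-jump compensator into the drift $c$ before taking the limit, whereas the paper keeps it inside the integral and recovers the combined term $cW^{(q)\prime}(0+)$ at the end; the two computations are identical.
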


\begin{rem} 
\label{rem2.5}
The quantities for $W^{(q)}$ that appeared in \eqref{2.7} are quoted from Chapter 3 of \cite{kuznetsov2012theory}, which only requires the considered $X$ to have a Gaussian part, i.e., $\sigma>0$. By $W^{(q)}(0)=0$ from \eqref{2.7} and the fact that $W^{(q)}$ vanishes on $(-\infty,0)$, we have $W^{(q)}\equiv 0$ on $(-\infty,0]$, which is to be invoked constantly. 

The equality in \eqref{2.8} can be obtained by taking $x\to 0^+$ in the identity given by
\begin{gather*}
  (\mathcal{G}-q)W^{(q)}(x)
  =0,
  \quad 
  x>0.
  \tag{2.9}
  \label{2.9}
\end{gather*}
The identity \eqref{2.9} is available in Chapter 3 of \cite{kuznetsov2012theory}, meaning that the $q$-scale function $W^{(q)}$ is the eigen-function for the infinitesimal generator $\mathcal{G}$ with the eigen-value $q$. Let $a$ be a positive constant and suppose that \eqref{2.6} holds. To prove Lemma \ref{Lem2.2}, we first observe that 
\begin{gather*}
    \left| W^{(q)}(x-y)-W^{(q)}(x)+W^{(q)\prime}(x)y1_{\{0<y<1\}}\right|
    \\
    =\left| 
    W^{(q)}(x-y)-W^{(q)}(x)+W^{(q)\prime}(x)y
    \right|1_{\{0<y<1,y<x\}}
    \\
    +
    \left| 
    W^{(q)}(0)-W^{(q)}(x)+W^{(q)\prime}(x)y
    \right|1_{\{0<y<1,y\geq x\}}
    +
    \left|
    W^{(q)}(x)-W^{(q)}(x-y)
    \right|1_{\{y\geq 1\}}
    \\
    =
    \left|
    \int_0^y 
    \left[
    W^{(q)\prime}(x)
    -
    W^{(q)\prime}_{+}(x-s)
    \right]
   ds \right|1_{\{0<y<1\}}
   +
    \left|
    W^{(q)}(x)-W^{(q)}(x-y)
    \right|1_{\{y\geq 1\}}
    \\
    \leq 
    h(y):=
    2\sup_{x\in [0, a]}|W^{(q)\prime}(x)|
    y1_{\{0<y<1\}}
    +2
    \sup_{x\in [0, a]}|W^{(q)}(x)|
    1_{\{y\geq 1\}}
    ,
\end{gather*}
is satisfied for all $x\in [0,a]$ and $y\geq 0$ since $W^{(q)}\equiv 0$ on $(-\infty,0]$. Then, the assertion to be proved is that $\sup_{x\in [0, a]}|W^{(q)\prime}(x)|<\infty$ and $\sup_{x\in [0, a]}|W^{(q)}(x)|<\infty$. By the identity $W^{(q)\prime}(0+)=\frac{2}{\sigma^2}$ in \eqref{2.7}, we immediately deduce that $$|W^{(q)\prime}(x)|\leq \frac{2}{\sigma^2}+\varepsilon,$$ for $0\leq x<\delta(\varepsilon)$, where $\delta(\varepsilon)$ is sufficiently small and $\varepsilon>0$ is arbitrary selected. Also, we have $W^{(q)}\in C^2(0,\infty)$ if $\sigma>0$. (see, for example, the discussion in \cite{kyprianou2010convexity}) Since $W^{(q)\prime}$ is continuous on the compact interval $[\delta(\varepsilon),a]$, it holds that $\max_{x\in [\delta(\varepsilon),a]}\left|W^{(q)\prime}(x)\right|<\infty$. Therefore, the bound for the function $|W^{(q)\prime}|$ on $[0,a]$ can be chosen as $$\max\{\frac{2}{\sigma^2}+\varepsilon,\max_{x\in [\delta(\varepsilon),a]}\left|W^{(q)\prime}(x)\right|\}<\infty.$$ Similarly, we also have $\sup_{x\in [0, a]}|W^{(q)}(x)|<\infty$. In addition, by \eqref{2.6}, we obtain that
\begin{gather*}
   \int_{(0,\infty)}h(y)\Pi(dy)
    \\
    \leq 2
    \max\{
    \sup_{x\in [0, a]}
    \left|W^{(q)}(x)\right|
    ,
    \sup_{x\in [0, a]}
    \left|W^{(q)\prime}(x)\right|
    \}
    \left[
    \int_{(0,1)}z\Pi(dz)
    +
    \Pi[1,\infty)
    \right]
    <\infty,
\end{gather*}
where $\Pi[1,\infty)<\infty$ holds because of the definition of the measure $\Pi$. Thus, by applying the dominated convergence theorem to the integral component in $\lim_{x\to 0^+}(\mathcal{G}-q)W^{(q)}(x)$,
$$
\lim_{x\to 0^+}
\int_{(0,\infty)}\left[W^{(q)}(x-y)-W^{(q)}(x)+W^{(q)\prime}(x)y1_{\{0<y<1\}}\right]
\Pi(dy)=\int_{(0,1)}z\Pi(dz)W^{(q)\prime}(0+),
$$ is derived with the aid of the fact that $W^{(q)}$ vanishes on $(-\infty,0]$. Then letting $x\to 0+$ in \eqref{2.9} and employing \eqref{2.7}, one shall be able to deduce that
\begin{gather*}
   \lim_{x\to 0+}(\mathcal{G}-q)W^{(q)}(x)=
   cW^{(q)\prime}(0+)
   +\frac{\sigma^2}{2}W^{(q)\prime\prime}(0+)
   -qW^{(q)}(0)
   =
   c\frac{2}{\sigma^2}
   +\frac{\sigma^2}{2}W^{(q)\prime\prime}(0+)
   =0,
\end{gather*}
which gives the value of $W^{(q)\prime\prime}(0+)$. 
\end{rem}

\begin{prop}
\label{lem2.end}
Let the condition stated for $X$ given in \eqref{2.6} be strengthened to
\begin{gather*}
    \sigma>0\quad \text{and} \quad \Pi(0,\infty)<\infty.
    \tag{2.10}
    \label{2.10}
\end{gather*}
Assume that $W^{(q)}$ is three times continuously differentiable on $(0,\infty)$. It is such that 
\begin{gather*}
    \left(\mathcal{G}-q\right)W^{(q)\prime}_+(x)=0, \text{ for all  } x>0.
    \tag{2.11}
    \label{2.11}
\end{gather*}
\end{prop}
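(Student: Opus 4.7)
The plan is to obtain \eqref{2.11} by differentiating the eigen-equation \eqref{2.9} in $x$ and carefully tracking the contribution of the jump of $W^{(q)\prime}$ at the origin. The finite L\'evy measure hypothesis $\Pi(0,\infty)<\infty$ from \eqref{2.10} lets me absorb the compensator into the drift $c$ and rewrite the generator applied to a smooth test function $\omega$ as
$$
\mathcal{G}\omega(x) = c\,\omega^{\prime}(x) + \tfrac{\sigma^2}{2}\omega^{\prime\prime}(x) + \int_{(0,\infty)}\left[\omega(x-y)-\omega(x)\right]\Pi(dy).
$$
Applied to $\omega=W^{(q)}$, \eqref{2.9} becomes
$$
cW^{(q)\prime}(x) + \tfrac{\sigma^2}{2}W^{(q)\prime\prime}(x) - \bigl(\Pi(0,\infty)+q\bigr)W^{(q)}(x) + G(x) = 0,
$$
with $G(x):=\int_{(0,\infty)}W^{(q)}(x-y)\Pi(dy)$. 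The hypothesised three-times continuous differentiability of $W^{(q)}$ on $(0,\infty)$ makes the first three terms classically differentiable in $x$, so the whole proposition reduces to identifying the right derivative of $G$.

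The key technical step is to show
$$
G^{\prime}_+(x) = \int_{(0,\infty)} W^{(q)\prime}_+(x-y)\,\Pi(dy),
$$
under the convention $W^{(q)\prime}_+(z)=W^{(q)\prime}(z)$ for $z>0$, $W^{(q)\prime}_+(0)=2/\sigma^2$ from \eqref{2.7}, and $W^{(q)\prime}_+(z)=0$ for $z<0$ (since $W^{(q)}$ vanishes identically there). To this end I would first establish the pointwise identity
$$
W^{(q)}(x+h-y) - W^{(q)}(x-y) = \int_0^h W^{(q)\prime}_+(x+s-y)\,ds,
$$
valid for every $h>0$ and every $y>0$. The cases $y<x$ (fundamental theorem of calculus on the positive half-line) and $y\geq x+h$ (both sides vanish) are immediate, while for $y\in[x,x+h)$ the integrand on the right vanishes for $s\in(0,y-x)$, and the substitution $u=x+s-y$ on the surviving interval reduces the right-hand side to $\int_0^{x+h-y}W^{(q)\prime}(u)\,du=W^{(q)}(x+h-y)$.

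With this identity in hand I would invoke Fubini, legitimised by the finiteness of $\Pi$ and by the local boundedness of $W^{(q)\prime}_+$ (which combines the assumed smoothness on $(0,\infty)$ with the explicit value at $0^+$ in \eqref{2.7}), to write
$$
\frac{G(x+h)-G(x)}{h} = \frac{1}{h}\int_0^h\!\int_{(0,\infty)} W^{(q)\prime}_+(x+s-y)\,\Pi(dy)\,ds.
$$
A dominated convergence argument, using the same bounds as majorant, shows that the inner $\Pi$-integral is right-continuous in $s$ at $s=0$, so the ratio converges as $h\to 0^+$ to $\int_{(0,\infty)} W^{(q)\prime}_+(x-y)\Pi(dy)$, which is the claimed formula for $G^{\prime}_+(x)$. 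Plugging this back into the differentiated version of the rewritten \eqref{2.9} and using $W^{(q)\prime}_+ = W^{(q)\prime}$ on $(0,\infty)$ then assembles exactly into $(\mathcal{G}-q)W^{(q)\prime}_+(x)=0$.

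The main obstacle is precisely the jump of $W^{(q)\prime}$ at the origin: it rules out a naive differentiation under the integral sign and forces the right-derivative convention throughout, in particular at the boundary contribution $y=x$ of the $\Pi$-integral. The finiteness assumption \eqref{2.10} is what makes the Fubini interchange and the dominated-convergence step go through cleanly; without it, the truncated compensator in $\mathcal{G}$ would re-enter the computation and the contribution of small jumps would have to be controlled much more delicately.
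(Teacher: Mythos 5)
Your proposal is correct and follows essentially the same route as the paper's own (sketched) argument in Remark \ref{rem.add}: differentiate the eigen-equation \eqref{2.9} from the right, represent increments of $W^{(q)}$ via the fundamental-theorem identity \eqref{2.12} (your pointwise identity is exactly this), and justify the interchange of right derivative and $\Pi$-integral by dominated convergence using $\Pi(0,\infty)<\infty$ together with the local boundedness of $W^{(q)\prime}$ and $W^{(q)\prime\prime}$ near the origin supplied by \eqref{2.7} and \eqref{2.8}. The only cosmetic difference is that you absorb the truncated compensator into the drift $c$ before differentiating (legitimate here since $\int_{(0,1)}z\,\Pi(dz)<\infty$) and phrase the interchange via Fubini plus right-continuity of the inner integral, whereas the paper keeps the compensator and bounds the combined difference quotient directly; both routes hinge on the same estimates.
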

\begin{rem}
\label{rem.add}
The claim is to sketch the proof of Proposition \ref{lem2.end}. Suppose \eqref{2.10} is true, which means that the argument in Remark \ref{rem2.5} applies here as well, and the deduction hinges on the use of the resulting fact that $W^{(q)\prime}$ and $W^{(q)\prime\prime}$ are bounded on $[0,a]$ for any $a>0$. The bound for $W^{(q)\prime}$ is shown in Remark \ref{rem2.5}. The bound of $W^{(q)\prime\prime}$ can be achieved by using \eqref{2.8} and imitating the argument for $W^{(q)\prime}$ in Remark \ref{rem2.5}. Fix $a>0$ and $\varepsilon^{\prime}>0$ and suppose that $x\in [0,a]$, $\varepsilon\in [0,\varepsilon^{\prime}]$, and $y\geq 0$. Take the right derivative of $x$ in \eqref{2.9}. The action to switch the right derivative and integral signs therein shall be justified by mimicking Remark \ref{rem2.5} to employ the dominated convergence theorem, recalling $\Pi(0,\infty)<\infty$ and the bound of $W^{(q)\prime}$ and $W^{(q)\prime\prime}$, and making use of the following observation: the easy-to-check fact that
\begin{gather*}
    W^{(q)}(t)-W^{(q)}(s)=\int_s^tW^{(q)\prime}_+(z)dz,\text{  for  }-\infty<s\leq t<\infty,
    \tag{2.12}
    \label{2.12}
\end{gather*}
is satisfied as $W^{(q)}\equiv 0$ on $(-\infty,0]$, $W^{(q)}$ is continuous at the origin, and $W^{(q)}\in C^2(0,\infty)$,
\begin{gather*}
\left|\frac{W^{(q)}(x-y+\varepsilon)-W^{(q)}(x+\varepsilon)+W^{(q)\prime}(x+\varepsilon)y
}{\varepsilon}
-
\frac{
W^{(q)}(x-y)-W^{(q)}(x)+W^{(q)\prime}(x)y
}{\varepsilon}
\right|
\\
=
\left|\
\frac{\int_0^{\varepsilon}\left[
W_+^{(q)\prime}(x+n-y)
-
W^{(q)\prime}(x+n)
+W^{(q)\prime\prime}(x+n)
y
\right]dn }{\varepsilon}
\right|
\\
\leq 
\left[
2 \sup_{x\in [0,a+\varepsilon^{\prime}]}
\left|W^{(q)\prime}(x)\right|
+ \sup_{x\in [0,a+\varepsilon^{\prime}]}
\left|W^{(q)\prime\prime}(x) \right|
\right]
,\text{  for all  }0<y<1,
\end{gather*}
is true by \eqref{2.12} and the definition of an indicator function, and, again in view of \eqref{2.12}, 
\begin{gather*}
    \left|\frac{W^{(q)}(x-y+\varepsilon)-W^{(q)}(x+\varepsilon)
}{\varepsilon}
-
\frac{
W^{(q)}(x-y)-W^{(q)}(x)
}{\varepsilon}
\right|
\\
=
\left|\
\frac{\int_0^{\varepsilon}\left[
W_+^{(q)\prime}(x+n-y)
-
W^{(q)\prime}(x+n)
\right]dn }{\varepsilon}
\right|
\leq 2\sup_{x\in[0,a+\varepsilon^{\prime}]}\left|W^{(q)\prime}(x)\right|
,
\end{gather*}
holds for all $y\geq 1$. Hence, for all $x>0$, the integral part contained in $\left\{\left(\mathcal{G}-q\right)W^{(q)}\right\}^{\prime}_{+}(x)$ allows swapping the integral and right derivative signs, and the right derivative and derivative signs are interchangeable in the other part by the pre-specified condition that $W^{(q)}$ is three times continuously differentiable on $(0,\infty)$, which finalizes an outline of the proof.

\end{rem}

\begin{lema}
\label{lem2.229}
Let the condition for $X$ given in Proposition \ref{lem2.end} be true. Suppose that $W^{(q)}$ is three times continuously differentiable on $(0,\infty)$. Consequently, 
\begin{gather*}
    W^{(q)\prime\prime\prime}(0+)
    :=
    \lim_{x\to 0^+}W^{(q)\prime\prime\prime }(x)
    =
    \frac{4}{\sigma^4}
    \left(
    \Pi(0,\infty)+q+\frac{2c^2}{\sigma^2}
    \right)
    .
    \tag{2.13}
    \label{2.13}
\end{gather*}
\end{lema}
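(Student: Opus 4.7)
The plan is to apply Proposition \ref{lem2.end} and let $x \to 0^{+}$ in the identity $(\mathcal{G}-q)W^{(q)\prime}_{+}(x)=0$, in complete analogy with the derivation of $W^{(q)\prime\prime}(0+)$ carried out in Remark \ref{rem2.5}. Since $W^{(q)} \in C^{3}(0,\infty)$ by hypothesis, $W^{(q)\prime}_{+}$ coincides with $W^{(q)\prime}$ on $(0,\infty)$, so we may use ordinary derivatives throughout.

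First I would use the finiteness of $\Pi$ from \eqref{2.10} to split the jump integral in $\mathcal{G}W^{(q)\prime}(x)$ and absorb the compensator $W^{(q)\prime\prime}(x)\int_{(0,1)} y\,\Pi(dy)$ into the drift, yielding the reduced drift $c = \gamma + \int_{(0,1)} z\,\Pi(dz)$. Equation \eqref{2.11} then rewrites as
\begin{gather*}
c\,W^{(q)\prime\prime}(x) + \frac{\sigma^{2}}{2}\,W^{(q)\prime\prime\prime}(x) + \int_{(0,\infty)}\bigl[W^{(q)\prime}(x-y) - W^{(q)\prime}(x)\bigr]\,\Pi(dy) - q\,W^{(q)\prime}(x) = 0, \quad x > 0.
\end{gather*}

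Next I would let $x \to 0^{+}$ term by term. The first term tends to $-4c^{2}/\sigma^{4}$ by \eqref{2.8}, and the last to $-2q/\sigma^{2}$ by \eqref{2.7}. For the jump integral, dominated convergence applies: the integrand is bounded in absolute value by $2\sup_{z \in [0,1]} |W^{(q)\prime}(z)|$, which is finite by the estimate in Remark \ref{rem2.5} and $\Pi$-integrable because $\Pi(0,\infty) < \infty$. For each fixed $y > 0$, $W^{(q)\prime}(x-y) \to 0$ as $x \to 0^{+}$ since $W^{(q)} \equiv 0$ on $(-\infty,0]$, while $W^{(q)\prime}(x) \to 2/\sigma^{2}$ by \eqref{2.7}, so the integral converges to $-\tfrac{2}{\sigma^{2}}\,\Pi(0,\infty)$. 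Since three of the four terms admit explicit limits, the remaining term $\tfrac{\sigma^{2}}{2} W^{(q)\prime\prime\prime}(x)$ must also converge; solving the resulting linear equation for $W^{(q)\prime\prime\prime}(0+)$ then delivers precisely \eqref{2.13}.

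The computation is essentially bookkeeping; the only genuinely subtle step is the interchange of limit and integral at $x = 0^{+}$, but this is handled immediately by the uniform bound on $W^{(q)\prime}$ near the origin already furnished by Remark \ref{rem2.5} together with the finiteness of the L\'evy measure in \eqref{2.10}.
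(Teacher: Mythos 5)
Your proposal is correct and follows essentially the same route as the paper: letting $x\to 0^+$ in $(\mathcal{G}-q)W^{(q)\prime}_+(x)=0$ from Proposition \ref{lem2.end}, justifying the interchange of limit and integral via dominated convergence using $\Pi(0,\infty)<\infty$ and the local boundedness of $W^{(q)\prime}$ from Remark \ref{rem2.5}, and then substituting \eqref{2.7} and \eqref{2.8}. The only (immaterial) difference is that you absorb the compensator term into the drift to get $c$ before passing to the limit, whereas the paper keeps it inside the jump integral and recombines afterwards.
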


\begin{rem}
Assume that the prerequisite of Proposition \ref{lem2.end} is true. Through switching the limit and integral signs within the integral component in $\lim_{x\to 0^+}(\mathcal{G}-q)W^{(q)\prime}_+(x)$,
\begin{gather*}
\lim_{x\to 0^+}
\int_{(0,\infty)}\left[W^{(q)\prime}_+(x-y)-W^{(q)\prime}(x)+W^{(q)\prime\prime}(x)y1_{\{0<y<1\}}\right]
\Pi(dy)\\=-\Pi(0,\infty)W^{(q)\prime}(0+)+\int_{(0,1)}z\Pi(dz)W^{(q)\prime\prime}(0+),
\tag{2.14}
\label{2.14}
\end{gather*} is derived heuristically with the help of the fact that $W^{(q)}$ vanishes on $(-\infty,0)$. Substituting \eqref{2.14} back into $\lim_{x\to 0^+}(\mathcal{G}-q)W^{(q)\prime}_+(x)$ and invoking \eqref{2.7} and \eqref{2.8} would give \eqref{2.13}, whose rigorous proof is akin to the justification of \eqref{2.8} in Remark \ref{rem2.5} and relatively easier as the imposed condition here $\Pi(0,\infty)<\infty$ is stricter, and should be therefore skipped here. 
\end{rem}
\section{Main results}
\label{S.3}
We always discuss the process $X$ under the assumption indicated in Lemma \ref{lem2.0} and Lemma \ref{Lem2.2}, which means that the smoothness requirement of $W^{(q)}$ on $(0,\infty)$ in Proposition \ref{lem2.end} and Lemma \ref{lem2.229} is automatically satisfied since, by Lemma \ref{lem2.1}, $W^{(q)}$ is infinitely differentiable on $(0,\infty)$ if the measure $\Pi$ has a completely monotone density. These assumptions, coupled with the other necessary restriction, are stated formally as:
\begin{gather*}
\sigma>0, \quad
    \phi(q)<\frac{2\delta}{\sigma^2},
    \quad 
    \Pi(0,\infty)<\infty,
    \text{  and  } \Pi \text{ has a completely monotone density}.
    \\ \text{Furthermore,  }S<\frac{c}{\Pi(0,\infty)+q}.
    \tag{$\circledast$}
    \label{A}
\end{gather*}
Notably, Assumption \eqref{A} ensures that Lemma \ref{lem2.229} holds. From the inequality \eqref{3.2.17} in the sequel, it follows that $$c>\delta-\frac{1}{2}\sigma^2\phi(q)>0,$$ holds if $\phi(q)<\frac{2\delta}{\sigma^2}$ in Assumption \eqref{A} is satisfied. In what follows, we only deal with the case $S>0$, as mentioned in \eqref{1.1}, and would reiterate this assumption when it is necessarily used.
\subsection{Optimality condition for threshold strategies}
\begin{teor}
\label{teor1}
 Let Assumption \eqref{A} be satisfied. Then it is such that
\begin{itemize}
  \item If $S\in (0, 
 \frac{\frac{\delta}{\phi(q)}-\frac{\sigma^2}{2}}{c-
 \delta 
 +\phi(q)\frac{\sigma^2}{2}})$ holds, then the threshold strategy forms the optimal dividend strategy with the strictly positive threshold level at $b^*_S>0$. 
\end{itemize}
\end{teor}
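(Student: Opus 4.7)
The plan follows the guess-and-verify strategy announced in the introduction. Concretely, I would (a) write the candidate value function $v_b(x):=V^S_{\pi_b}(x)$ explicitly via scale-function identities; (b) choose $b^*_S$ from a first-order condition and show $b^*_S>0$ under the hypothesis on $S$; and (c) verify that $v_{b^*_S}$ solves the HJB equation
\begin{gather*}
(\mathcal{G}-q)v(x)+\sup_{l\in[0,\delta]}l\bigl(1-v'(x)\bigr)=0,\quad x>0,
\end{gather*}
with boundary $v(0)=S$, so that a standard It\^o/supermartingale comparison yields $v_{b^*_S}(x)\geq V^S_\pi(x)$ for every $\pi\in\mathcal{D}$.

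Step (a) decomposes $v_b$ into a discounted dividend flow and a discounted creeping-terminal contribution. The dividend flow is handled by the refracted-L\'evy fluctuation identities of \cite{kyprianou2010refracted}, written in terms of $W^{(q)}$, $\mathbf{W}^{(q)}$, $\overline{W}^{(q)}$, $\overline{\mathbf{W}}^{(q)}$ and $\phi(q)$ as in \eqref{2.3}--\eqref{2.4}. The creeping component requires the discounted creeping probability of the refracted process at level $0$; since $\sigma>0$ this is nontrivial and, as noted in Remark \ref{rem2.5}, its representation involves $W^{(q)\prime}(0+)=2/\sigma^2$ and $W^{(q)\prime\prime}(0+)=-4c/\sigma^4$ from Lemma \ref{Lem2.2}. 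Assembling these pieces produces a candidate $v_b$ of class $C^2$ on $(0,\infty)\setminus\{b\}$ and of class $C^1$ across $b$ by construction of the scale-function representation.

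For step (b), I would differentiate $v_b(x)$ in $b$ for fixed $x\leq b$, reducing the critical-point condition to a scalar equation $g_S(b)=0$ built from $W^{(q)}$ and its first two derivatives together with the creeping correction. Log-convexity of $W^{(q)\prime}$ (Corollary \ref{log-con} together with Lemma \ref{lem2.1}) makes $g_S$ monotone past the point $a^*$ of Lemma \ref{lem2.1}, so uniqueness of the relevant root and its identification as a true minimum is automatic. Evaluating $g_S$ at $b=0$ with the aid of \eqref{2.7}, \eqref{2.8}, and the large-argument asymptotics \eqref{2.4}, one obtains $g_S(0)<0$ precisely when $S<(\delta/\phi(q)-\sigma^2/2)/(c-\delta+\phi(q)\sigma^2/2)$; since $g_S$ becomes positive for large $b$, the intermediate value theorem produces the desired strictly positive root $b^*_S$.

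The main obstacle is step (c). On $(0,b^*_S)$ one has $(\mathcal{G}-q)v_{b^*_S}\equiv 0$ by construction, and the HJB reduces to $v'_{b^*_S}(x)\geq 1$; this follows from the convexity bundle of Lemma \ref{lem2.1} and the defining condition for $b^*_S$. On $(b^*_S,\infty)$ the relevant inequality is $(\mathcal{G}-q)v_{b^*_S}(x)+\delta\bigl(1-v'_{b^*_S}(x)\bigr)\leq 0$; here the higher-order results proved earlier become decisive. Proposition \ref{lem2.end}, which identifies $W^{(q)\prime}_+$ as an eigenfunction of $\mathcal{G}$ with eigenvalue $q$, collapses the integro-differential expression to a single boundary sign condition, which in turn reduces through Lemma \ref{Lem3.4} to an inequality governed by $W^{(q)\prime\prime\prime}(0+)$ from \eqref{2.13}; the auxiliary bound $S<c/(\Pi(0,\infty)+q)$ built into Assumption \eqref{A} is precisely what keeps this sign right. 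Once HJB is verified, applying It\^o's formula to $e^{-q(t\wedge\tau_\pi)}v_{b^*_S}(U^\pi_{t\wedge\tau_\pi})+\int_0^{t\wedge\tau_\pi}e^{-qs}dL^\pi_s$ yields a supermartingale; sending $t\to\infty$, using $v_{b^*_S}(0)=S$ to absorb the creeping contribution, and invoking dominated convergence closes the verification and establishes optimality of $\pi_{b^*_S}$.
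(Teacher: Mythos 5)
Your overall architecture --- explicit scale-function representation of $V^S_b$, a first-order condition in $b$ whose evaluation at $b=0$ via \eqref{2.7}--\eqref{2.8} and \eqref{A.1.4}--\eqref{A.1.5} yields exactly the stated bound on $S$, and an HJB/supermartingale verification --- is the paper's. But two steps you declare easy are where the real work lies, and as written they are gaps.

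First, you assert that uniqueness of the critical point and the inequality $v'_{b^*_S}\geq 1$ on $(0,b^*_S]$ are ``automatic'' from log-convexity of $W^{(q)\prime}$ and the defining condition for $b^*_S$. They are not. The inequality $v'_{b^*_S}(x)\geq 1$ for $x\leq b^*_S$ reduces (via \eqref{3.3.5} and $A_S(b^*_S)=\theta_S(b^*_S)$) to $\theta_S(x)\leq\theta_S(b^*_S)$, i.e.\ to monotonicity of $\theta_S$ on $[0,b^*_S]$. Establishing that $\theta_S$ increases up to some $a^*_S$ and decreases afterwards (Lemma \ref{Lem3.4}) requires the auxiliary chain $\theta_S\to g_S\to r_S$ of \eqref{3.2.6}--\eqref{3.2.9}, the log-convexity of the \emph{third} derivative $W^{(q)\prime\prime\prime}$ (Corollary \ref{log-con}), and crucially the comparison $\theta_S(0)=\tfrac{\sigma^2}{2}+Sc>g_S(0+)=S\tfrac{\sigma^2}{2}\tfrac{\Pi(0,\infty)+q}{c}+Sc$, which is where $W^{(q)\prime\prime\prime}(0+)$ from \eqref{2.13} and the standing assumption $S<c/(\Pi(0,\infty)+q)$ actually enter. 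You have misplaced the role of Proposition \ref{lem2.end}: it is used to compute $W^{(q)\prime\prime\prime}(0+)$ feeding this comparison, not to ``collapse'' the HJB inequality on $(b^*_S,\infty)$.

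Second, your treatment of the region $x>b^*_S$ is missing its engine. On $(b^*_S,\infty)$ the generator identity for the refracted process makes $(\mathcal{G}-q)v+\delta(1-v')$ vanish identically, so the whole content of the HJB there is the pointwise inequality $v'_{b^*_S}\leq 1$ (Lemma \ref{lem3.7}); no eigenfunction argument dispatches this. The paper proves it by exploiting the completely monotone decomposition $\mathbf{W}^{(q)}=\phi'(q)e^{\phi(q)\cdot}-f$ of Lemma \ref{lem2.0} to write $v'_{b^*_S}(x)=\int_{(0,\infty)}e^{-xz}p_S(z)\,\xi(dz)$ with $p_S$ concave and $p_S(0)=\delta>0$ (Lemma \ref{lem3.8}), which yields $v''_{b^*_S}(x)\leq e^{(x-b^*_S)\beta}v''_{b^*_S}(b^*_S+)$; combined with $v''_{b^*_S}(b^*_S+)=-\theta_S'(b^*_S+)W^{(q)\prime}(b^*_S+)\leq 0$ (again from the $\theta_S$ monotonicity), this gives $v'_{b^*_S}$ non-increasing past $b^*_S$ with value $1$ at $b^*_S$. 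Without this representation, or a substitute for it, your step (c) does not close.
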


\begin{rem}
The optimality of threshold dividend strategies in the case $S=0$ has been shown in \cite{kyprianou2012optimal}. 
\end{rem}

The selection of the point $b_S^*$ is accomplished by investigating a set of functions, $A_S$, $\theta_S$, $g_S$, and $r_S$, whose illustration is deferred to the subsection below.
\subsection{A criterion for selecting the threshold $b_S^*$}
Invoking Theorem 6, and Theorem 7 of \cite{kyprianou2010refracted}, the expected net present values defined in \eqref{1.1} with the threshold strategy executed can be expressed as: for all $b\geq 0$ and $x\in \mathbb{R}$, 
\begin{gather*}
V^S_b(x)=
    \mathbb{E}_x
    \left[ 
    \int_{[0,\tau_{\pi_b}]}
    e^{-qt}l_t^bdt
    \right]
    +S\mathbb{E}
    \left[ 
    e^{-q\tau_{\pi_b}}
    1_{\{U^b_{\tau_{\pi_b}}=0\}}
    \right]
    \\
    =S\frac{\sigma^2}{2}
    \left[
    W^{(q)\prime}(x)
    +
    \delta 
    \int_b^x \textbf{W}^{(q)}(x-y)W^{(q)\prime \prime}
    (y)dy
    \right]
    \\
    +A_S(b)\left[  
    W^{(q)}(x)
    +\delta \int_b^x \textbf{W}^{(q)}(x-y)
    W^{(q)\prime}(y)dy
    \right]
    -
    \delta 
    \overline
    {{\mathbf{W}}}^{(q)}(x-b)
    ,\quad S>0,
    \tag{3.2.1}
    \label{3.2.1}
\end{gather*}
in which the function $A_S$ is given by
\begin{equation}
    A_S(b)=
    \frac{
    \int_b^{\infty}
    e^{-\phi(q)y}\left(1-
    S\frac{\sigma^2}{2}
    W^{(q)\prime\prime}(y)\right)dy
    }  
    {\int_b^{\infty}e^{-\phi(q)y}W^{(q)\prime}(y)dy}
,
\quad 
S> 0,
\tag{3.2.2}
\label{3.2.2}
\end{equation}
for all $b\geq 0$. Note that, within \eqref{3.2.1}, $V^S_b(0)=\lim_{x\to 0^+}V^S_b(x)$. Define the function $\theta_S$ by
\begin{equation*}
    \theta_S(b)
=
    \frac{
1-S\frac{\sigma^2}{2}
W^{(q)\prime \prime }(b)}
{W^{(q)\prime}(b)}
,
\quad 
S> 0,
\tag{3.2.3}
\label{3.2.3}
\end{equation*}
for all $b\in (0,\infty)$.

Differentiating \eqref{3.2.2}, we have
\begin{gather*}
    A_S^{\prime}(b)
    =
    \frac{W^{(q)\prime}(b)}{\int_b^{\infty}e^{-\phi(q)(y-b)}W^{(q)\prime}(y)dy}
\left[  A_S(b)
    -
    \theta_S(b)
    \right],\text{  for all  }b\geq 0,
    \tag{3.2.4}
    \label{3.2.4}
\end{gather*}
where $A_S^{\prime}(0)=A_S^{\prime}(0+)$ and $\theta_S(0)=\theta_S(0+)$. Here we note that $\frac{W^{(q)\prime}(b)}{\int_b^{\infty}e^{-\phi(q)(y-b)}W^{(q)\prime}(y)dy}>0$ for all $b\geq 0$ since $W^{(q)}$ strictly increases on $[0,\infty)$ ($W^{(q)\prime}>0$ on $[0,\infty)$) and thereby that,
\begin{gather*}
\text{for all  }b\in [0,\infty), \quad
    A^{\prime}_S(b)>(<)~0\iff
    A_S(b)>(<)~\theta_S(b).
    \tag{3.2.5}
    \label{3.2.5}
\end{gather*}
Observe that the differential equation for $\theta_S$ holds:
\begin{gather*}
    \theta^{\prime}_S(b)
    =
    -\frac{W^{(q)\prime \prime }(b)}{W^{(q)\prime }(b)}
    \left[\theta_S(b)-g_S(b)\right],
    \quad 
     b\in (0,a^*)\cup(a^*,\infty),
\tag{3.2.6}
\label{3.2.6}
\end{gather*}
where the function $g_S$ is given by 
\begin{gather*}
g_S(b)=
-S\frac{\sigma^2}{2}\frac{W^{(q)\prime\prime \prime}(b)}{W^{(q)\prime\prime }(b)}
,
\quad 
S> 0,
\tag{3.2.7}
\label{3.2.7}
\end{gather*}
for $b\in (0,a^*)\cup(a^*,\infty)$. Taking derivatives on both sides of \eqref{3.2.7} deduces that
\begin{gather*}
    g_S^{\prime}(b)
    =-\frac{
    W^{(q)\prime\prime\prime}(b) 
    }{
    W^{(q)\prime\prime}(b)
    }
    \left[ 
    g_S(b)-r_S(b)
    \right],\quad 
    b\in (0,a^*)\cup (a^*,\infty),
    \tag{3.2.8}
    \label{3.2.8}
\end{gather*}
where the function $r_S$ takes the form:
\begin{gather*}
    r_S(b)=
-S\frac{\sigma^2}{2}\frac{W^{(q)\prime\prime \prime\prime}(b)}{W^{(q)\prime\prime\prime}(b)}
,
\quad 
S> 0,
\tag{3.2.9}
\label{3.2.9}
\end{gather*}
for $b\in (0,\infty)$.

\begin{lema}
\label{lem000}
Under Assumption \eqref{A}, given that $S>0$ and $a^*>0$, we have
\begin{gather*}
    g_S
    \text{  either firstly decreases on  } (0,o_S^*)
    \text{  and then increases  }(o_S^*,a^*)
    \\
    \text{  or increases on  }(0,a^*),
    \tag{3.2.10}
    \label{3.2.10}
\end{gather*}
where $o_S^*$ is a constant satisfying that $0<o_S^*< a^*$.
\end{lema}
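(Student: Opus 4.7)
The plan is to combine the ODE \eqref{3.2.8} with a sign analysis of its coefficient and a monotonicity property of $r_S$, then argue that every interior critical point of $g_S$ on $(0,a^*)$ must be a strict local minimum. Elementary continuity then forces the critical point to be unique, and the two cases claimed in the lemma fall out from the boundary behaviour of $g_S$ at $a^*$.

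First I would pin down signs on $(0,a^*)$. By Lemma \ref{lem2.1}, $W^{(q)\prime}$ is strictly decreasing there, hence $W^{(q)\prime\prime}<0$; and by Corollary \ref{log-con} at $n=3$, $W^{(q)\prime\prime\prime}$ is strictly log-convex on $(0,\infty)$ and in particular strictly positive. Consequently the coefficient $-W^{(q)\prime\prime\prime}/W^{(q)\prime\prime}$ in \eqref{3.2.8} is strictly positive on $(0,a^*)$, so
\[
g_S'(b)>(<)~0 \iff g_S(b)>(<)~r_S(b),\quad b\in(0,a^*).
\]
Since $W^{(q)\prime\prime}(a^*)=0$ while $W^{(q)\prime\prime\prime}(a^*)>0$, one also has $g_S(b)\to+\infty$ as $b\to a^{*-}$, which rules out globally non-increasing behaviour of $g_S$ on $(0,a^*)$.

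The main step is to show that any $b_0\in(0,a^*)$ with $g_S'(b_0)=0$ is a strict local minimum of $g_S$. Differentiating \eqref{3.2.8} and using $g_S(b_0)=r_S(b_0)$, the term containing the derivative of the coefficient vanishes, leaving
\[
g_S''(b_0) \;=\; \frac{W^{(q)\prime\prime\prime}(b_0)}{W^{(q)\prime\prime}(b_0)}\,r_S'(b_0).
\]
The prefactor is strictly negative by the sign analysis above, so it remains to verify $r_S'<0$. Rewriting $r_S(b)=-S\frac{\sigma^2}{2}(\log W^{(q)\prime\prime\prime})'(b)$ gives $r_S'(b)=-S\frac{\sigma^2}{2}(\log W^{(q)\prime\prime\prime})''(b)$, and Corollary \ref{log-con} applied again at the odd index $n=3$ supplies strict log-convexity of $W^{(q)\prime\prime\prime}$, i.e.\ $(\log W^{(q)\prime\prime\prime})''>0$. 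Hence $r_S'<0$ on $(0,\infty)$ and $g_S''(b_0)>0$, as claimed. A byproduct of $r_S'<0$ is that $g_S'$ cannot vanish on any subinterval, so the critical points are isolated.

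To conclude, any $C^1$ function whose critical points are all strict local minima has at most one critical point on a connected open interval, since two would force a local maximum strictly between them. So $g_S$ admits at most one critical point on $(0,a^*)$: if there is none, monotonicity together with $g_S(a^{*-})=+\infty$ forces $g_S$ to be strictly increasing on $(0,a^*)$; if there is one, call it $o_S^*\in(0,a^*)$, it is a strict local minimum, so $g_S$ strictly decreases on $(0,o_S^*)$ and strictly increases on $(o_S^*,a^*)$. I expect the delicate point not to lie in the algebra but in isolating the right ingredients---especially recognising that the log-convexity of $W^{(q)\prime\prime\prime}$ from Corollary \ref{log-con} is precisely what makes $r_S$ decrease and thereby forces every critical point of $g_S$ to be a local minimum.
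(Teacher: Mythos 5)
Your proof is correct and follows essentially the same route as the paper's: both rest on the sign equivalence derived from \eqref{3.2.8} on $(0,a^*)$, the strict monotone decrease of $r_S$ obtained from the log-convexity of $W^{(q)\prime\prime\prime}$ in Corollary \ref{log-con}, and the blow-up $g_S(b)\to+\infty$ as $b\to a^{*-}$. The only difference is cosmetic: the paper concludes by a direct single-intersection argument between $g_S$ and the strictly decreasing $r_S$ (splitting on whether $g_S(0+)\geq r_S(0+)$), whereas you formalize the same fact through a second-derivative test showing every critical point is a strict local minimum.
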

\begin{proof}
Let $S>0$ and $a^*>0$. Thanks to \eqref{3.2.8}, it holds that
\begin{gather*}
\text{for  }S>0\text{  and  } b\in (0,a^*),\quad
g_S^{\prime}(b)>(<)~0\iff g_S(b)>(<)~r_S(b),
\tag{3.2.11}
\label{3.2.11}
\end{gather*}
based on the fact that $W^{(q)\prime}$ strictly decreases ($W^{(q)\prime\prime}<0$) on $(0,a^*)$ and that $W^{(q)\prime}$ is convex ($W^{(q)\prime\prime\prime}>0$) on $(0,\infty)$, all of which is lifted from Lemma \ref{lem2.1}. By Corollary \ref{log-con}, we must have that $r_S$ strictly decreases on $(0,\infty)$ because of the expression of $r_S$ in \eqref{3.2.9} and the log-convexity property of $W^{(q)\prime\prime\prime}$ on $(0,\infty)$. If $g_S(0+)\geq r_S(0+)$, it then holds that $g_S$ increases on $(0,a^*)$ based on \eqref{3.2.11} and the property that $r_S$ strictly decreases on $(0,\infty)$. From Lemma \ref{lem2.1}, $W^{(q)\prime}$ strictly decreases ($W^{(q)\prime\prime}<0$) on $(0,a^*)$ and strictly increases ($W^{(q)\prime\prime}>0$) on $(a^*,\infty)$, which entails that $W^{(q)\prime\prime}(a^*)=0$. If $g_S(0+)< r_S(0+)$, by the condition that $r_S$ strictly decreases on $(0,\infty)$ and the property that $$\lim_{b\to a^*-}g_S(b)=+\infty,$$ which is achieved on the basis of the definition of $g_S$ in \eqref{3.2.7} and the fact that $W^{(q)\prime\prime}<0$ on $(0,a^*)$, $W^{(q)\prime\prime}(a^*)=0$ and $W^{(q)\prime\prime\prime}>0$ on $(0,\infty)$, it follows from \eqref{3.2.11} that $g_S$ and $r_S$ would intersect once at the point $o_S^*$ satisfying that $0<o_S^*<a^*$. As a result, $g_S$ decreases on $(0,o_S^*)$ and increases on $(o_S^*,a^*)$ if $g_S(0+)< r_S(0+)$. Hence, \eqref{3.2.10} is proven for the case $S>0$ and $a^*>0$.

\end{proof}

\begin{rem}
Lemma \ref{lem000} is the key element of the survey in the sense that the counterpart of $g_S$ in \cite{junca2019optimality} and \cite{kyprianou2012optimal} features obviously fine properties, whereas in our case, it is not evident. This deduction for Lemma \ref{lem000} is accomplished mainly with the help of Corollary \ref{log-con}. Notably, Proposition \ref{lem2.end} and Lemma \ref{lem2.229} allow the expression in \eqref{2.13} to hold, which is advantageous in Lemma \ref{Lem3.4} hereinafter. Both Lemma \ref{lem000} and Lemma \ref{Lem3.4} are essentially motivated by the proof of Theorem 1 in \cite{loeffen2009optimal}.
\end{rem}

\begin{lema}
\label{Lem3.4}
Suppose that $S>0$. Let Assumption \eqref{A} be satisfied. Define the point $a_S^*$ in the following manner:
\begin{gather*}
a_S^*=
    \inf\{
    b\geq 0:
    \theta_S^{\prime}(b)\leq  0
    \},
\end{gather*}
with the convention that $\inf\emptyset=\infty$. Then it holds that
\begin{gather*}
        \theta_S\text{  increases on  }(0,a_S^*)
    \text{  and decreases on  }(a_S^*,\infty),
    \tag{3.2.12}
    \label{3.2.12}
\end{gather*}
where $a^*_S\leq a^*$.
\end{lema}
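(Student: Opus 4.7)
The plan is to pin down the sign of $\theta_S'$ separately on $(0,a^*)$ and on $[a^*,\infty)$, then glue the two regions by using the boundary behaviour at $0+$.

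First, I would evaluate the right-hand limits at the origin. Inserting $W^{(q)\prime}(0+)=2/\sigma^2$, $W^{(q)\prime\prime}(0+)=-4c/\sigma^4$ and $W^{(q)\prime\prime\prime}(0+)=(4/\sigma^4)(\Pi(0,\infty)+q+2c^2/\sigma^2)$ from Lemma \ref{Lem2.2} and Lemma \ref{lem2.229} into the definitions of $\theta_S$ and $g_S$ produces
\[
\theta_S(0+)=\tfrac{\sigma^2}{2}+Sc,\qquad g_S(0+)=Sc+\frac{S\sigma^2(\Pi(0,\infty)+q)}{2c},
\]
so that $\theta_S(0+)-g_S(0+)=(\sigma^2/(2c))\bigl(c-S(\Pi(0,\infty)+q)\bigr)>0$, the strict positivity being exactly the Assumption \eqref{A} bound $S<c/(\Pi(0,\infty)+q)$. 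Since $W^{(q)\prime\prime}(0+)<0$ forces $a^*>0$, the factor $-W^{(q)\prime\prime}/W^{(q)\prime}$ is strictly positive on $(0,a^*)$, and the ODE \eqref{3.2.6} then delivers $\theta_S'(0+)>0$, so $a_S^*>0$.

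Next, to control $\theta_S'$ on $[a^*,\infty)$, I would expand by the quotient rule into
\[
\theta_S'(b)=\frac{S(\sigma^2/2)\bigl\{[W^{(q)\prime\prime}(b)]^2-W^{(q)\prime\prime\prime}(b)W^{(q)\prime}(b)\bigr\}-W^{(q)\prime\prime}(b)}{[W^{(q)\prime}(b)]^2}.
\]
Strict log-convexity of $W^{(q)\prime}$ from Corollary \ref{log-con} is precisely the inequality $W^{(q)\prime\prime\prime}W^{(q)\prime}>(W^{(q)\prime\prime})^2$ on $(0,\infty)$, so the first summand in the numerator is strictly negative whenever $S>0$. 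On $[a^*,\infty)$ the term $-W^{(q)\prime\prime}$ is additionally non-positive by Lemma \ref{lem2.1}, so $\theta_S'(b)<0$ throughout $[a^*,\infty)$. This already proves $a_S^*\leq a^*$ and the strict decrease of $\theta_S$ on $[a^*,\infty)$.

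The main obstacle is to rule out oscillation of $\theta_S$ on $(0,a^*)$. Here I would work with $h_S:=\theta_S-g_S$; by \eqref{3.2.6} the sign of $\theta_S'$ agrees with that of $h_S$ on $(0,a^*)$, and $h_S(0+)>0$ by the first paragraph while $h_S(b)\to-\infty$ as $b\to a^{*-}$ because $g_S$ blows up to $+\infty$ there (from $W^{(q)\prime\prime}(a^*)=0$, $W^{(q)\prime\prime}<0$ just below $a^*$, and $W^{(q)\prime\prime\prime}(a^*)>0$). At any zero $z$ of $h_S$ on $(0,a^*)$ one has $\theta_S'(z)=0$, hence $h_S'(z)=-g_S'(z)$. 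If $h_S$ changed sign more than once, the second zero $z_2$ would be a $-\to+$ crossing, forcing $h_S'(z_2)\geq 0$, i.e.\ $g_S'(z_2)\leq 0$; but the first zero $z_1$ is a $+\to-$ crossing with $g_S'(z_1)\geq 0$, and by Lemma \ref{lem000} this places $z_1$ in the increasing zone of $g_S$, so $z_2>z_1$ also lies there and satisfies $g_S'(z_2)>0$, a contradiction. Hence $h_S$ changes sign exactly once, at $a_S^*$, and $\theta_S$ strictly increases on $(0,a_S^*)$ and strictly decreases on $(a_S^*,a^*)$. Gluing with the previous step, $\theta_S$ strictly decreases on all of $(a_S^*,\infty)$, completing the proof.
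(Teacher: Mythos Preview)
Your argument is correct and follows essentially the same route as the paper: both prove $\theta_S'<0$ on $[a^*,\infty)$ via the explicit quotient-rule formula \eqref{3.2.13} together with strict log-convexity of $W^{(q)\prime}$, and both handle $(0,a^*)$ through the ODE \eqref{3.2.6}, the shape of $g_S$ from Lemma \ref{lem000}, and the boundary inequality $\theta_S(0+)>g_S(0+)$ obtained from \eqref{2.7}, \eqref{2.8}, \eqref{2.13} and the bound $S<c/(\Pi(0,\infty)+q)$. Your packaging differs only cosmetically---a single contradiction argument on the sign changes of $h_S=\theta_S-g_S$ replaces the paper's case split on the two shapes of $g_S$, and you additionally (and correctly) observe that $W^{(q)\prime\prime}(0+)=-4c/\sigma^4<0$ forces $a^*>0$, which renders the paper's separate treatment of the case $a^*=0$ vacuous under Assumption \eqref{A}.
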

\begin{proof}
Differentiating \eqref{3.2.3} entails that, for all $b>0$, 
\begin{gather*}
    \theta_S^{\prime}(b)
    =
    \frac{
    -W^{(q)\prime\prime}(b)
    +S\frac{\sigma^2}{2}
    \left[ 
    (W^{(q)\prime\prime}(b))^2
    -W^{(q)\prime}(b)W^{(q)\prime\prime \prime}(b)
    \right]
    }
    {\left(W^{(q)\prime}(b)\right)^2}
.
    \tag{3.2.13}
    \label{3.2.13}
\end{gather*}
Moreover, thanks to \eqref{3.2.6}, it is such that 
\begin{gather*}
\text{for  }S>0\text{  and  } b\in (0,a^*),\quad
\theta_S^{\prime}(b)>(<)~0\iff \theta_S(b)>(<)~g_S(b),
\tag{3.2.14}
\label{3.2.14}
\end{gather*}
on the basis of the fact that $W^{(q)\prime}$ strictly decreases ($W^{(q)\prime\prime}<0$) on $(0,a^*)$ and that $W^{(q)}$ strictly increases ($W^{(q)\prime}>0$) on $(0,\infty)$.
\begin{itemize}
    \item 
Let $S>0$ and $a^*>0$. The expression between the square bracket in \eqref{3.2.13} is negative on $(0,\infty)$ due to the log-convexity property of $W^{(q)\prime}$ from Lemma \ref{lem2.1}. Since additionally, $W^{(q)\prime\prime}>0$ on $(a^*,\infty)$ holds in \eqref{3.2.13} based on Lemma \ref{lem2.1}, we obtain that $\theta_S$ strictly decreases on $(a^*,\infty)$ and therefore $a_S^*\leq a^*$. By the relation shown in the proof of Lemma \ref{lem000}: $$\lim_{b\to a^{*-}}g_S(b)=+\infty,$$ we have $a_S^*<a^*$ based upon \eqref{3.2.14}. Recalling \eqref{3.2.10}, we have that $g_S$ strictly increases on $(0,a^*)$ or $g_S$ firstly decreases on $(0,o_S^*)$ and then increases on $(o_S^*,a^*)$. 
\begin{itemize}
    \item In the first situation: $$ g_S\text{  strictly increases to 
 }\lim_{b\to a^{*-}}g_S(b)=+\infty\text{  on  } (0,a^*),\text{  }0<a^*,$$ by using the relation \eqref{3.2.14}, it holds that 
\begin{gather*}
    \text{either  }\theta_S\text{ intersects with  }g_S\text{  once on  }(0,a^*)\text{  }(\theta_S(0)>  g_S(0+)),\\ \text{or  } \theta_S\text{ strictly decreases on  }(0,\infty)\text{  }(\theta_S(0)\leq  g_S(0+)).
\end{gather*}
\item In the second case: 
\begin{gather*}
    g_S \text{  decreases on  } (0,o_S^*) \text{  and increases to  }\lim_{b\to a^{*-}}g_S(b)=+\infty \text{  on  } (o_S^*,a^*) ,\\\text{  }0<o_S^*<a^*,
\end{gather*} 
by invoking the relation 
\eqref{3.2.14}, either $\theta_S(0)\geq g_S(0+)$, which implies that $\theta_S$ increases on $(0,o_S^*)$ as $g_S$ decreases on $(0,o_S^*)$, and that
\begin{gather*}
    \theta_S\text{ and }g_S\text{ intersect once only on }(o_S^*,a^*),
\end{gather*}
or $\theta_S(0)< g_S(0+)$, which entails that
\begin{gather*}
    \text{either  }\theta_S\text{ intersects with  }g_S\text{ once on  }(0,o_S^*)\text{  and also once on  }
    (o_S^*,a^*)\\(\theta_S(o_S^*)> g_S(o_S^*)),\\ \text{or  } \theta_S\text{  strictly decreases on  }(0,\infty)\text{  }(\theta_S(o_S^*)\leq  g_S(o_S^*)).
\end{gather*} 
\end{itemize}
Recalling the restriction in Assumption \eqref{A} saying that $\Pi(0,\infty)<\infty$ and $S<\frac{c}{\Pi(0,\infty)+q}$ and the definition of $\theta_S$ and $g_S$ in \eqref{3.2.3} and \eqref{3.2.7}, correspondingly, we must have $$\theta_S(0)=\frac{\sigma^2}{2}+Sc>g_S(0+)=S\frac{\sigma^2}{2}\frac{\Pi(0,\infty)+q}{c}+Sc,$$ by using \eqref{2.7}, \eqref{2.8} and \eqref{2.13} to compute the value of $\theta_S(0)$ and $g_S(0+)$. Combining the aforementioned yields that $\theta_S$ and $g_S$ intersect with each other only once on $(0,\infty)$ at $a_S^*>0$. Thus, \eqref{3.2.12} is valid.
\item
    If $S>0$ and $a^*=0$. Implementing the preceding argument implies that $\theta_S$ strictly decreases on $(a^*,\infty)=(0,\infty)$. As a result, \eqref{3.2.12} is satisfied with $a^*_S=a^*=0$.
\end{itemize}

\end{proof}
\begin{prop}
\label{prop3}
Let $S>0$. Suppose that Assumption \eqref{A} holds. Define
\begin{gather*}
    b^{*}_S
    =\inf\{
    b\geq 0:
    A_S^{\prime}(b)\leq 0
    \}=
    \inf\{
    b\geq 0:
    A_S(b)\leq  \theta_S(b)
    \},
    \tag{3.2.15}
    \label{3.2.15}
\end{gather*}
with the convention that $\inf\emptyset=\infty$. Then we have
\begin{itemize}
    \item It holds that
    \begin{gather*}
    b^{*}_S>0\iff S <
 \frac{\frac{\delta}{\phi(q)}-\frac{\sigma^2}{2}}{c-
 \delta 
 +\phi(q)\frac{\sigma^2}{2}}.
    \end{gather*}.
    \item It is such that
    $$b^*_S\leq a_S^*\leq a^*<\infty.$$
\end{itemize}
\end{prop}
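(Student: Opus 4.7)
The plan proceeds in two independent stages matching the two bullets. The equivalence of the two infima defining $b^*_S$ is immediate from (3.2.5), so by continuity $b^*_S>0$ iff $A_S(0)>\theta_S(0)$, and the first bullet reduces to computing these two boundary values. Inserting $W^{(q)\prime}(0+)=2/\sigma^2$ and $W^{(q)\prime\prime}(0+)=-4c/\sigma^4$ from (2.7)-(2.8) into (3.2.3) gives $\theta_S(0)=\sigma^2/2+Sc$, as already noted in the proof of Lemma \ref{Lem3.4}. For $A_S(0)$ I would integrate by parts twice in (3.2.2). Because $W^{(q)}(0)=0$, integration by parts reduces the denominator to $\phi(q)\int_0^\infty e^{-\phi(q)y}W^{(q)}(y)\,dy$, which by the Laplace transform (2.1) and the identity $\psi_X(\phi(q))-q=\delta\phi(q)$ (from (2.2)) evaluates to
\[
\int_0^\infty e^{-\phi(q)y}W^{(q)\prime}(y)\,dy=\frac{\phi(q)}{\psi_X(\phi(q))-q}=\frac{1}{\delta}.
\]
A second integration by parts on the $W^{(q)\prime\prime}$-term in the numerator, using $W^{(q)\prime}(0+)=2/\sigma^2$ and the boundary behavior at infinity forced by $\phi(q)>\Phi(q)$, yields $A_S(0)=\delta/\phi(q)+S\delta-S\sigma^2\phi(q)/2$ after multiplying by $\delta$. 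The inequality $A_S(0)>\theta_S(0)$ then rearranges to $\delta/\phi(q)-\sigma^2/2 > S(c-\delta+\sigma^2\phi(q)/2)$, and both factors are positive under Assumption \eqref{A} (the left side since $\phi(q)<2\delta/\sigma^2$, and the coefficient of $S$ on the right since $c>\delta-\sigma^2\phi(q)/2$, as the paper notes just after \eqref{A}). Dividing gives the stated characterization.

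For the second bullet I would argue $b^*_S\leq a_S^*$ by contradiction. Suppose $b^*_S>a_S^*$: then $A_S>\theta_S$ on $[0,b^*_S)$, so by (3.2.5) $A_S$ is strictly increasing there, and in particular $A_S(a_S^*)>\theta_S(a_S^*)$. If $b^*_S<\infty$, continuity forces $A_S(b^*_S)=\theta_S(b^*_S)$, and combining strict monotonicity of $A_S$ with the strict decrease of $\theta_S$ on $(a_S^*,\infty)$ from Lemma \ref{Lem3.4} gives the chain $A_S(b^*_S)>A_S(a_S^*)>\theta_S(a_S^*)>\theta_S(b^*_S)=A_S(b^*_S)$, a contradiction. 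If $b^*_S=\infty$, then $A_S$ is strictly increasing on all of $[0,\infty)$; L'Hopital's rule applied to the quotient in (3.2.2), whose numerator and denominator both vanish as $b\to\infty$ (since $\phi(q)>\Phi(q)$ and the integrands decay exponentially), gives $\lim_{b\to\infty}A_S(b)=\lim_{b\to\infty}\theta_S(b)=L\in\mathbb{R}$. Strict monotonicity of $A_S$ toward $L$ forces $A_S<L$ everywhere, while the strict decrease of $\theta_S$ toward $L$ on $(a_S^*,\infty)$ forces $\theta_S>L$ there, again contradicting $A_S>\theta_S$. The remaining bounds $a_S^*\leq a^*$ and $a^*<\infty$ are already contained in the proof of Lemma \ref{Lem3.4} and in Lemma \ref{lem2.1} respectively, closing the chain.

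The main obstacle is the explicit evaluation of $A_S(0)$: the two successive integrations by parts must carry the correct boundary values $W^{(q)}(0)=0$ and $W^{(q)\prime}(0+)=2/\sigma^2$, and the crucial collapse of the denominator to $1/\delta$ depends on recognizing $\psi_X(\phi(q))-q=\delta\phi(q)$ from the definition of $\phi$. Once these constants are pinned down, the dichotomy is pure algebra, and the contradiction argument for $b^*_S\leq a_S^*$ is a direct consequence of the monotonicity portrait already established for $\theta_S$ in Lemma \ref{Lem3.4} together with the asymptotic agreement of $A_S$ and $\theta_S$ at infinity.
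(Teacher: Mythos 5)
Your proposal is correct, and the two bullets deserve separate verdicts. For the first bullet you follow essentially the paper's route: the two integrations by parts you describe are exactly the identities \eqref{A.1.4}--\eqref{A.1.5} that the paper invokes, your boundary values $W^{(q)}(0)=0$, $W^{(q)\prime}(0+)=2/\sigma^2$ and the collapse of the denominator to $1/\delta$ via $\psi_X(\phi(q))-q=\delta\phi(q)$ are the same computation, and the resulting values $A_S(0)=\frac{\delta}{\phi(q)}+S\bigl(\delta-\frac{\sigma^2}{2}\phi(q)\bigr)$ and $\theta_S(0)=\frac{\sigma^2}{2}+Sc$ match \eqref{3.2.16}; the positivity of $c-\delta+\frac{1}{2}\sigma^2\phi(q)$ that licenses the division is \eqref{3.2.17}.

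For the second bullet you take a genuinely different route. The paper rewrites $A_S^{\prime}(b)$ as the integral \eqref{3.2.18} of $\theta_S(y)-\theta_S(b)$ over $y\in(b,\infty)$ against a positive kernel, so that the monotonicity portrait \eqref{3.2.12} of $\theta_S$ immediately forces $A_S^{\prime}\leq 0$ on $(a_S^*,\infty)$ and hence $b_S^*\leq a_S^*$ directly from the definition \eqref{3.2.15}. You instead argue by contradiction using only the pointwise sign equivalence \eqref{3.2.5} together with an asymptotic matching of $A_S$ and $\theta_S$ at infinity via L'H\^opital. Both arguments are valid; the paper's is shorter once \eqref{3.2.18} is written down, while yours avoids that algebraic manipulation at the cost of the extra case $b_S^*=\infty$. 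Two small points in that case are worth making explicit: the $0/0$ form of \eqref{3.2.2} as $b\to\infty$ and the vanishing of the boundary terms rest on $\phi(q)>\Phi(q)$ together with the growth rate $W^{(q)\prime}(y)\sim \Phi(q)e^{\Phi(q)y}/\psi_X^{\prime}(\Phi(q))$ (available under the completely monotone assumption), and L'H\^opital requires the limit of $\theta_S$ to exist --- which it does, being eventually monotone by Lemma \ref{Lem3.4}; if that limit were $-\infty$ you would get $A_S\to-\infty$ for an increasing function, which is itself a contradiction, so the dichotomy closes either way. The remaining inclusions $a_S^*\leq a^*<\infty$ are, as you say, already contained in Lemma \ref{Lem3.4} and Lemma \ref{lem2.1}.
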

\begin{proof}
To begin with, we show the condition guaranteeing $b^*_S>0$.
\begin{itemize}
\item Implementing the identities \eqref{2.7} and \eqref{2.8} and the equations \eqref{A.1.4} and \eqref{A.1.5}, we have 
    \begin{gather*}
        A_S^{\prime}(0)>~0
        \iff
        A_S(0)=\frac{\delta}{\phi(q)}+S\left[\delta -\frac{\sigma^2}{2}\phi(q)\right]>~\frac{\sigma^2}{2}+Sc=\theta_S(0),
        \tag{3.2.16}
        \label{3.2.16}
    \end{gather*}
    for $S>0$. Using the property that $\psi_Y(\phi(q))=\psi_X(\phi(q))-\delta\phi(q)=q$, it can be inferred that 
\begin{gather*}
    c-\delta+\frac{1}{2}\sigma^2\phi(q)
    =\frac{
    q+\int_{(0,\infty)}
    \left(
    1-e^{-\phi(q)z}
    \right)\Pi(dz)
    }{\phi(q)}>0.
    \tag{3.2.17}
    \label{3.2.17}
\end{gather*}
As $b_S^*>0$ if and only if $A_S^{\prime}(0)>0$, we conclude the argument for $b_S^*>0$ by the derivation previously made in combination.
\end{itemize}
Next, we investigate the relationship between $b_S^*$ and $a_S^*$.
\begin{itemize}
    \item Simple transformation of the identity \eqref{3.2.4} provides the expression as follows:
    \begin{gather*}
        A_S^{\prime}(b)=
        \frac{
        \int_b^{\infty}
        e^{-\phi(q)y}
        W^{(q)\prime}(b)
         W^{(q)\prime}(y)
        \left[ 
        \theta_S(y)-
        \theta_S(b)
        \right]dy
        }{
        e^{\phi(q)b}
        (\int_b^{\infty}e^{-\phi(q)y}W^{(q)\prime}(y)dy)^2
        },
        \tag{3.2.18}
        \label{3.2.18}
    \end{gather*}
    for all $b\geq 0$. From \eqref{3.2.12} in Lemma \ref{Lem3.4}, we derive that $A_S^{\prime}\leq 0$ on $(a_S^*,\infty)$. Hence $b_S^*\leq a_S^*$. The fact that $a^*_S\leq a^*$ is based on Lemma \ref{Lem3.4}, and the relation $a^*<\infty$ comes from Lemma \ref{lem2.1}.
\end{itemize}
\end{proof}

\subsection{Verification}
We will verify Theorem \ref{teor1} step by step in the following text.

\begin{lema}
\label{Lem3.6}
Assume that $S>0$. Let $\pi$ be an admissible dividend strategy. Presume that the surplus process evolves in accordance with Assumption \eqref{A} and suppose that the function $V^S_{\pi}$ is twice continuously differentiable on $(0,\infty)$, and that the HJB equation as follows
\begin{gather*}
    (\mathcal{G}-q)V^S_{\pi}(x)
    +\sup_{0\leq r\leq \delta}
    r\left[1- V^{S\prime}_{\pi}(x)\right]\leq 0,
     \text{  for all  } x>0,
     \tag{3.3.1}
     \label{3.3.1}
\end{gather*}
holds. In addition, assume that $V^S_{\pi}\geq \Lambda_S$ on $(-\infty,0]$. Then $\pi$ is the optimal dividend strategy.
\end{lema}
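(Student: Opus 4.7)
The plan is the standard verification argument: fix an arbitrary competitor strategy $\tilde\pi \in \mathcal{D}$ and any initial surplus $x\geq 0$, and show that $V^S_\pi(x)\geq V^S_{\tilde\pi}(x)$. Taking the supremum over $\tilde\pi$ then forces $V^S_\pi = V^S$, i.e.\ $\pi$ is optimal. The bridge between $V^S_\pi$ and $V^S_{\tilde\pi}$ is built by applying an appropriate It\^o formula to $e^{-qt}V^S_\pi(U^{\tilde\pi}_t)$ on the stochastic interval $[0,\tau_{\tilde\pi}\wedge T\wedge \tau_n]$, where $\tau_n$ is a localizing sequence for the local martingale part. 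Since $U^{\tilde\pi}_t = X_t-\int_0^t l^{\tilde\pi}_s\,ds$ is an absolutely continuous perturbation of the spectrally negative L\'evy process $X$ and $V^S_\pi\in C^2(0,\infty)$, the It\^o decomposition reads
\begin{gather*}
e^{-qt}V^S_\pi(U^{\tilde\pi}_t) = V^S_\pi(x) + \int_0^t e^{-qs}\left[(\mathcal{G}-q)V^S_\pi(U^{\tilde\pi}_{s-})-l^{\tilde\pi}_s V^{S\prime}_\pi(U^{\tilde\pi}_{s-})\right]ds + M_t,
\end{gather*}
with $M_t$ a local martingale (stemming from the Gaussian part and the compensated Poisson integral).

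Next I would plug in the HJB inequality \eqref{3.3.1}. Because $l^{\tilde\pi}_s\in[0,\delta]$, it satisfies
\begin{gather*}
(\mathcal{G}-q)V^S_\pi(y) - l^{\tilde\pi}_s V^{S\prime}_\pi(y) = -l^{\tilde\pi}_s + \left[(\mathcal{G}-q)V^S_\pi(y) + l^{\tilde\pi}_s(1-V^{S\prime}_\pi(y))\right] \leq -l^{\tilde\pi}_s,
\end{gather*}
where the last inequality uses that $l^{\tilde\pi}_s(1-V^{S\prime}_\pi(y))\leq \sup_{0\leq r\leq\delta}r(1-V^{S\prime}_\pi(y))$. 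Substituting and taking expectations at $\tau_{\tilde\pi}\wedge T\wedge \tau_n$ kills the local martingale contribution and yields
\begin{gather*}
\mathbb{E}_x\left[\int_0^{\tau_{\tilde\pi}\wedge T\wedge \tau_n}e^{-qs}l^{\tilde\pi}_s\,ds + e^{-q(\tau_{\tilde\pi}\wedge T\wedge \tau_n)}V^S_\pi(U^{\tilde\pi}_{\tau_{\tilde\pi}\wedge T\wedge \tau_n})\right]\leq V^S_\pi(x).
\end{gather*}

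The final step is a passage to the limit $n\to\infty$ and $T\to\infty$. Monotone convergence delivers $\int_0^{\tau_{\tilde\pi}}e^{-qs}l^{\tilde\pi}_s\,ds$ on the left. For the terminal piece, on $\{\tau_{\tilde\pi}<\infty\}$ I would use the hypothesis $V^S_\pi\geq \Lambda_S$ on $(-\infty,0]$ together with quasi-left continuity of $X$ at creeping (so that $V^S_\pi(0) = V^S_\pi(0+)\geq S = \Lambda_S(0)$) to obtain $\mathbb{E}_x[e^{-q\tau_{\tilde\pi}}V^S_\pi(U^{\tilde\pi}_{\tau_{\tilde\pi}})1_{\{\tau_{\tilde\pi}<\infty\}}] \geq \mathbb{E}_x[e^{-q\tau_{\tilde\pi}}\Lambda_S(U^{\tilde\pi}_{\tau_{\tilde\pi}})1_{\{\tau_{\tilde\pi}<\infty\}}]$, while on $\{\tau_{\tilde\pi}=\infty\}$ the discounting forces $e^{-qt}V^S_\pi(U^{\tilde\pi}_t)\to 0$ provided $V^S_\pi$ does not grow faster than $e^{\Phi(q)x}$, a bound that can be read off from the scale-function representation \eqref{3.2.1} combined with \eqref{2.4}. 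Summing the two inequalities gives $V^S_\pi(x)\geq V^S_{\tilde\pi}(x)$ as desired.

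The main obstacle is the technical machinery supporting the It\^o step rather than the algebra. Because $V^S_\pi$ is only assumed $C^2$ on the open half-line $(0,\infty)$ while the controlled surplus can creep down to $0$ (where $V^S_\pi$ is accessed only via right limits $V^{S\prime}_\pi(0+)$, $V^{S\prime\prime}_\pi(0+)$, in the spirit of Remark \ref{rem2.5}) and can also jump across $0$ into the negative half-line (where $V^S_\pi$ is not assumed smooth), one must justify the It\^o expansion by, for instance, stopping $U^{\tilde\pi}$ at the first exit of $[\varepsilon,1/\varepsilon]$, applying the classical It\^o formula there, and then passing $\varepsilon\downarrow 0$ using the growth/boundedness control on $W^{(q)},W^{(q)\prime},W^{(q)\prime\prime}$ furnished by Section \ref{S.2} (which controls the jump integral inside $\mathcal{G}V^S_\pi$ via dominated convergence) and the inequality $V^S_\pi\geq \Lambda_S$ at the ruin event. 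The reduction of the local martingale $M_t$ to a true martingale at the stopped times, and the dominated-convergence bound on the jump component of $\mathcal{G}V^S_\pi$, are the concrete subtleties to be discharged in the appendix.
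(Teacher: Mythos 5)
Your proposal is correct and follows essentially the same route as the paper's own (sketched) proof, which likewise applies It\^o's formula to $e^{-qt}V^S_{\pi}(U^{\tilde\pi}_{t})$ along a localizing sequence, invokes the HJB inequality to bound the drift by $-l^{\tilde\pi}_s$, and passes to the limit using the nonnegativity of $V^S_{\pi}$ and the hypothesis $V^S_{\pi}\geq\Lambda_S$ at ruin, following Lemma~5 of \cite{kyprianou2012optimal}. The only cosmetic difference is that on $\{\tau_{\tilde\pi}=\infty\}$ you invoke a growth bound to force the terminal term to zero, whereas for the one-sided inequality it suffices (as the paper does) to drop that term by nonnegativity via Fatou.
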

\begin{proof}
We provide the sketch of the proof here. For $\pi$ defined in Lemma \ref{Lem3.6}, abbreviate $V^S_\pi(x)$ to $\varpi(x)$ for $x\in \mathbb{R}$. Following the proof of Lemma 5 in \cite{kyprianou2012optimal}, for the fixed dividend strategy $\pi_0 \in \mathcal{D}$, we shall have, for $x>0$, 
\begin{gather*}
    \varpi(x)
    =
    -\int_{0}^{t\wedge \tau_n}e^{-qs}
    \left[
    (\mathcal{G}-q)\varpi(U_{s-}^{\pi_0})
    +l^{\pi_0}_s(1- \varpi^{\prime}(U_{s-}^{\pi_0}))
    \right]ds
    +\int_{0}^{t\wedge \tau_n}
    e^{-qs}l^{\pi_0}_sds
                \\
    +e^{-qt\wedge \tau_n}\varpi(U_{t\wedge\tau_n }^{\pi_0})
    +M_t,
\end{gather*}
where $M=(M_t)_{t\geq 0}$ is a zero-mean martingale, $\{\tau_n\}_{n\geq 1}$ is some stopping time sequence that bounds the controlled surplus $U^{\pi_0}=(U^{\pi_0}_t)_{t\geq 0}$ on the closed interval $[\frac{1}{n},n]$. Let $\tau_{\pi_0}$ be the ruin time for the process $U^{\pi_0}$. In the formula above, implementing \eqref{3.3.1} and the fact that $\varpi(x)\geq 0$ for $x\neq 0$ ($\varpi\geq S>0$ on $(0,\infty)$ and $\varpi\geq 0$ on $(-\infty,0)$), and that $\varpi(0)\geq S$, taking the expectation under $\mathbb{E}_x$, and allowing $t$ and $n$ to go to infinity (dominated convergence theorem), we can reveal that $\varpi(x)=V^S_{\pi}(x)$ satisfies
\begin{gather*}
    \varpi(x)
    \geq
    V^S_{\pi_0}(x)
    =
    \mathbb{E}_x
    \left[  
    \int_{0}^{\tau_{\pi_0}}
    e^{-qs}l^{{\pi_0}}_sds
    +
    S
    e^{-q \tau_{\pi_0}}
     1_{\{
    U^{\pi_0}_{\tau_{\pi_0}}=0
    \}}
    \right],
\end{gather*}
as $\tau_n \nearrow \tau_{\pi_0}\text{ a.s.}$ under $\mathbb{P}_x$. Since the choice of $\pi_0\in \mathcal{D}$ is arbitrary, then $\varpi=V^S_{\pi}\geq V^S$ on $(0,\infty)$. To extent $\varpi\geq V^S$ to the domain $[0,\infty)$, again, see the argument in the proof of Lemma 5 in \cite{kyprianou2012optimal}. In this way, the optimality of $\pi$ follows. 
\end{proof}

Prior to showing the optimality for threshold strategies, the corresponding value function $V^S_{b_S^*}$ shall be proven to be sufficiently smooth on $(0,\infty)$.
\begin{lema}
\label{LEM32AA}
Let Assumption \eqref{A} be satisfied. Then the value function $V^S_{b^*_S}$ is twice continuously differentiable on $(0,\infty)$. 
\end{lema}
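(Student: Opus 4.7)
The plan is to establish smoothness piece-by-piece and then verify smooth pasting at the break-point $b^*_S$. From the explicit form \eqref{3.2.1}, on the interval $(0,b^*_S)$ the integrals over $[b^*_S,x]$ vanish and $\overline{\mathbf{W}}^{(q)}(x-b^*_S)=0$, so
\begin{gather*}
V^S_{b^*_S}(x)
=S\tfrac{\sigma^2}{2}W^{(q)\prime}(x)+A_S(b^*_S)W^{(q)}(x),\quad 0<x<b^*_S,
\end{gather*}
while for $x>b^*_S$ the full expression of \eqref{3.2.1} applies. Since Lemma \ref{lem2.1} (combined with Remark \ref{FiN.rem.l}) gives that $W^{(q)}$ and $\mathbf{W}^{(q)}$ are $C^\infty$ on $(0,\infty)$, $V^S_{b^*_S}$ is $C^\infty$ on each of the two open intervals. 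It therefore remains to match first and second derivatives at $b^*_S$ (the case $b^*_S=0$ is immediate and the case $b^*_S=\infty$ means the left-hand branch extends to all of $(0,\infty)$).

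First I would apply Leibniz's rule to the convolution integrals, using the two boundary identities $\mathbf{W}^{(q)}(0)=0$ and $\mathbf{W}^{(q)\prime}(0+)=2/\sigma^2$, which come from Lemma \ref{Lem2.2} applied to $Y$ (since $Y$ also has Gaussian coefficient $\sigma$). The vanishing of $\mathbf{W}^{(q)}(0)$ kills all the boundary terms that would otherwise appear in the first derivative, so
\begin{gather*}
V^{S\prime}_{b^*_S}(b^*_S+)=S\tfrac{\sigma^2}{2}W^{(q)\prime\prime}(b^*_S)+A_S(b^*_S)W^{(q)\prime}(b^*_S)=V^{S\prime}_{b^*_S}(b^*_S-),
\end{gather*}
which gives $C^1$-smoothness unconditionally.

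Next I would differentiate once more; now the boundary term $\mathbf{W}^{(q)\prime}(0+)=2/\sigma^2$ appears. Collecting the jump of the second derivative at $b^*_S$ produces
\begin{gather*}
V^{S\prime\prime}_{b^*_S}(b^*_S+)-V^{S\prime\prime}_{b^*_S}(b^*_S-)
=\delta\Bigl[SW^{(q)\prime\prime}(b^*_S)+\tfrac{2}{\sigma^2}A_S(b^*_S)W^{(q)\prime}(b^*_S)-\tfrac{2}{\sigma^2}\Bigr],
\end{gather*}
and the bracket equals $-\tfrac{2}{\sigma^2}W^{(q)\prime}(b^*_S)\bigl[\theta_S(b^*_S)-A_S(b^*_S)\bigr]$ by the definitions \eqref{3.2.2}–\eqref{3.2.3} of $\theta_S$ and $A_S$. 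Hence continuity of the second derivative reduces exactly to the smooth-pasting identity $A_S(b^*_S)=\theta_S(b^*_S)$.

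To close the argument I would invoke \eqref{3.2.15}: when $0<b^*_S<\infty$, the infimum defining $b^*_S$ is attained because both $A_S$ and $\theta_S$ are continuous on $(0,\infty)$ (from the smoothness of $W^{(q)}$), so $A^\prime_S$ changes sign exactly at $b^*_S$, which via \eqref{3.2.5} forces $A_S(b^*_S)=\theta_S(b^*_S)$. I expect the main obstacle to lie precisely here: one has to confirm that $b^*_S$ — defined as an infimum of a sub-level set — actually realizes equality $A_S=\theta_S$, and this needs the continuity of both sides together with the sign analysis carried out in Lemma \ref{Lem3.4} and Proposition \ref{prop3}. All remaining work is bookkeeping with Leibniz's rule and the explicit boundary values $\mathbf{W}^{(q)}(0)=0$, $\mathbf{W}^{(q)\prime}(0+)=2/\sigma^2$ borrowed from Lemma \ref{Lem2.2}.
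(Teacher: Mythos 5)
Your proposal is correct and follows essentially the same route as the paper: piecewise smoothness from the $C^\infty$ property of the scale functions, $C^1$-pasting for free from $\mathbf{W}^{(q)}(0)=0$, and reduction of the second-derivative jump (which carries the factor $\mathbf{W}^{(q)\prime}(0+)=2/\sigma^2$) to the smooth-pasting identity $A_S(b^*_S)=\theta_S(b^*_S)$. The paper organizes the same computation by integrating the convolution terms by parts so that the potentially discontinuous contribution appears explicitly as $\delta\,\mathbf{W}^{(q)\prime}(x-b^*_S)W^{(q)\prime}(b^*_S)\bigl[A_S(b^*_S)-\theta_S(b^*_S)\bigr]$, but the mechanism and the needed inputs are identical to yours.
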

\begin{proof}
Suppose that $b^*_S=0$ hold first. Differentiating two sides of \eqref{2.3} once and twice, as well as using integral by parts each time, yields that 
\begin{gather*}
    W^{(q)}(x)+\delta \int_0^x\mathbf{W}^{(q)}(x-y)W^{(q)\prime}(y)dy=\mathbf{W}^{(q)}(x),
    \tag{3.3.2}
    \label{3.3.2}
    \\
    W^{(q)\prime}(x)+\delta \int_0^x\mathbf{W}^{(q)}(x-y)W^{(q)\prime\prime}(y)dy
    =
    \mathbf{W}^{(q)\prime }(x)
    -\delta \frac{2}{\sigma^2}
    \mathbf{W}^{(q)}(x),
    \tag{3.3.3}
    \label{3.3.3}
\end{gather*}
where we used the identities $W^{(q)}(0+)=0$ and $W^{(q)\prime}(0+)=\frac{2}{\sigma^2}$ from \eqref{2.7} in Lemma \ref{Lem2.2}. Plugging \eqref{3.3.2} and \eqref{3.3.3} into \eqref{3.2.1} deduces that
\begin{gather*}
V^S_{b_S^*}(x)=
   V^S_0(x)=
   S\frac{\sigma^2}{2}
   \mathbf{W}^{(q)\prime}(x)+
   \left(
   A_S(0)-S\delta 
   \right)\mathbf{W}^{(q)}(x)
   -\delta
   \overline{\mathbf{W}}^{(q)}(x).
   \tag{3.3.4}
   \label{3.3.4}
\end{gather*}
Obviously, $V^S_0$ is twice continuously differentiable on $(0,\infty)$ based on the smoothness property of $\mathbf{W}^{(q)}$ from Remark \ref{FiN.rem.l}.

Assume that $b^*_S>0$. Differentiating \eqref{3.2.1} once on $(0,b^*_S)\cup (b^*_S,\infty)$ and implementing integral by parts shows that 
\begin{gather*}
    V^{S\prime}_{b^*_{S}}(x)
    =S\frac{\sigma^2}{2}
    \left(
    W^{(q)\prime \prime }(x)
    +\delta \int_{b^*_{S}}^x
    \textbf{W}^{(q)\prime}(x-y)W^{(q)\prime \prime}(y)dy
    \right)
    -\delta \textbf{W}^{(q)}(x-b^*_{S})
    \\
    +
    A_S(b^*_{S})
    \left(
    W^{(q)\prime }(x)
    +\delta \int_{b^*_{S}}^x
    \textbf{W}^{(q)\prime}(x-y)W^{(q) \prime}(y)dy
    \right)
\\
    =S\frac{\sigma^2}{2}
    \left(
    W^{(q)\prime \prime }(x)
    +\delta \int_{b^*_S}^x
    \textbf{W}^{(q)}(x-y)W^{(q)\prime \prime \prime}(y)dy
    \right)
    \\
    +
    A_S(b^*_S)
    \left(
    W^{(q)\prime }(x)
    +\delta \int_{b^*_S}^x
    \textbf{W}^{(q)}(x-y)W^{(q)\prime \prime}(y)dy
    \right)
        \\
    +\delta \mathbf{W}^{(q)}(x-b^*_S)
    W^{(q)\prime}(b^*_S)
    \left[A_S(b^*_S)-\theta_S(b^*_S)\right],
    \tag{3.3.5}
    \label{3.3.5}
\end{gather*}
and differentiating \eqref{3.3.5} on $(0,b^*_S)\cup (b^*_S,\infty)$ would yield that 
\begin{gather*}
     V^{S\prime\prime}_{b^*_{S}}(x)
     =S\frac{\sigma^2}{2}
    \left(
    W^{(q)\prime \prime \prime}(x)
    +\delta \int_{b^*_S}^x
    \textbf{W}^{(q)\prime}(x-y)W^{(q)\prime \prime \prime}(y)dy
    \right)
    \\
    +
    A_S(b^*_S)
    \left(
    W^{(q)\prime \prime}(x)
    +\delta \int_{b^*_S}^x
    \textbf{W}^{(q)\prime}(x-y)W^{(q)\prime \prime}(y)dy
    \right)
        \\
    +\delta \mathbf{W}^{(q)\prime}(x-b^*_S)
    W^{(q)\prime}(b^*_S)
    \left[A_S(b^*_S)-\theta_S(b^*_S)\right].
        \tag{3.3.6}
    \label{3.3.6}
\end{gather*}
Since $A_S(b_S^*)=\theta_S(b_S^*)$ given that $b_S^*>0$, the continuity for $V^{S\prime }_{b^*_{S}}$ and $V^{S\prime \prime}_{b^*_{S}}$ at $b^*_{S}$ is also satisfied:
\begin{gather*}
    V^{S\prime }_{b^*_{S}}(b^*_{S}-)=V^{S\prime }_{b^*_{S}}(b^*_{S}+),
    \quad 
    V^{S\prime \prime}_{b^*_{S}}(b^*_{S}-)=V^{S\prime\prime}_{b^*_{S}}(b^*_{S}+),
\end{gather*}
due to the expression in \eqref{3.3.5} and \eqref{3.3.6}. Combining the deduction made with the fact that $q$-scale functions are infinitely differentiable from Remark \ref{FiN.rem.l}, we obtain that $V^S_{b_S^*}$ is twice continuously differentiable on $(0,\infty)$.

\end{proof}

Lemma \ref{lem3.7} below is useful as it gives a more tractable equation for the value function that is equivalent to the HJB equation in \eqref{3.3.1}. The proof of Lemma \ref{lem3.7} is omitted here, a standard argument for this could be found in Lemma 7 of \cite{kyprianou2012optimal}.
\begin{lema}
\label{lem3.7}
The value function $V^S_{b^*_S}$ satisfies \eqref{3.3.1} if and only if 
\begin{gather*}
    V^{S\prime}_{b^*_S}(x)\geq 1,\quad \text{if}\quad 0<x\leq b^*_S,
    \\
    V^{S\prime}_{b^*_S}(x)\leq 1,\quad \text{if}\quad x> b^*_S.
    \tag{3.3.7}
     \label{3.3.7}
\end{gather*}
\end{lema}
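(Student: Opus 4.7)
The plan is to simplify the HJB operator in \eqref{3.3.1} and then to verify that $V^S_{b^*_S}$ solves two complementary integro-differential equations on the two sides of the threshold $b^*_S$; the equivalence with \eqref{3.3.7} then follows by immediate case analysis. Because the map $r\mapsto r[1-V^{S\prime}_{b^*_S}(x)]$ is linear on $[0,\delta]$, I first rewrite
\begin{gather*}
\sup_{0\le r\le \delta} r\bigl[1-V^{S\prime}_{b^*_S}(x)\bigr]
= \delta\bigl[1-V^{S\prime}_{b^*_S}(x)\bigr]^{+},
\end{gather*}
so that \eqref{3.3.1} for $\pi_{b^*_S}$ becomes the single inequality
\begin{gather*}
(\mathcal{G}-q)V^S_{b^*_S}(x)+\delta\bigl[1-V^{S\prime}_{b^*_S}(x)\bigr]^{+}\le 0,\quad x>0.
\end{gather*}

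Next I would prove the key auxiliary identities
\begin{gather*}
(\mathcal{G}-q)V^S_{b^*_S}(x)=0, \quad x\in(0,b^*_S), \\
(\mathcal{G}-q)V^S_{b^*_S}(x)+\delta\bigl[1-V^{S\prime}_{b^*_S}(x)\bigr]=0, \quad x\in(b^*_S,\infty).
\end{gather*}
These correspond heuristically to the absence of dividend payments below $b^*_S$ (so the discounted value is a local martingale) and to dividends being paid at the maximal rate $\delta$ above $b^*_S$ (yielding the refracted generator $\mathcal{G}-\delta\partial_x -q$ acting on the value). My plan is to obtain both by applying $(\mathcal{G}-q)$ directly to the explicit representation \eqref{3.2.1} and invoking the eigen-function relations \eqref{2.9} for $W^{(q)}$, \eqref{2.11} from Proposition \ref{lem2.end} for $W^{(q)\prime}$, together with the analogous relation $(\mathcal{G}-\delta\partial_x -q)\mathbf{W}^{(q)}=0$ for $Y$. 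The required twice continuous differentiability of $V^S_{b^*_S}$ on $(0,\infty)$ is already provided by Lemma \ref{LEM32AA}, and the smoothing property $A_S(b^*_S)=\theta_S(b^*_S)$ ensures that the boundary contributions at $x=b^*_S$ in the differentiated representations \eqref{3.3.5}--\eqref{3.3.6} cancel.

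Once these two equations are in hand, both implications reduce to elementary comparisons. For $x\in(0,b^*_S]$, the HJB inequality becomes $\delta[1-V^{S\prime}_{b^*_S}(x)]^{+}\le 0$, which is equivalent to $V^{S\prime}_{b^*_S}(x)\ge 1$. For $x>b^*_S$, substituting the second identity, the HJB inequality reads
\begin{gather*}
-\delta\bigl[1-V^{S\prime}_{b^*_S}(x)\bigr]+\delta\bigl[1-V^{S\prime}_{b^*_S}(x)\bigr]^{+}\le 0;
\end{gather*}
if $V^{S\prime}_{b^*_S}(x)\le 1$ the left-hand side equals $0$, whereas if $V^{S\prime}_{b^*_S}(x)>1$ it equals $\delta[V^{S\prime}_{b^*_S}(x)-1]>0$. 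Thus HJB on $(b^*_S,\infty)$ is equivalent to $V^{S\prime}_{b^*_S}(x)\le 1$, which is exactly \eqref{3.3.7}.

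The main obstacle I expect is the rigorous justification of the two governing equations, specifically interchanging $(\mathcal{G}-q)$ with the convolution integrals against $\mathbf{W}^{(q)}$ in \eqref{3.2.1} and controlling the non-local jump term of $\mathcal{G}$ near the boundary $x=b^*_S$. This interchange calls for the same kind of dominated-convergence and boundedness estimates developed in Remark \ref{rem2.5} and Remark \ref{rem.add}, combined with Assumption \eqref{A} (in particular $\Pi(0,\infty)<\infty$) to ensure integrability. Once that technical step is cleared, the equivalence with \eqref{3.3.7} is immediate, matching the standard argument in Lemma 7 of \cite{kyprianou2012optimal}.
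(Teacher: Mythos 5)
Your proposal is correct and follows essentially the route the paper itself points to: the paper omits the proof of Lemma \ref{lem3.7} and defers to the standard argument of Lemma 7 in \cite{kyprianou2012optimal}, which is exactly your scheme of reducing the supremum to $\delta[1-V^{S\prime}_{b^*_S}]^+$, establishing $(\mathcal{G}-q)V^S_{b^*_S}=0$ on $(0,b^*_S)$ and $(\mathcal{G}-q)V^S_{b^*_S}+\delta[1-V^{S\prime}_{b^*_S}]=0$ on $(b^*_S,\infty)$ via the eigenfunction relations \eqref{2.9} and \eqref{2.11}, and then concluding by the elementary sign analysis. You also correctly identify the only genuinely technical point, namely justifying the generator identities (in particular the nonlocal term for $x>b^*_S$), which is handled by the same dominated-convergence estimates as in Remark \ref{rem2.5} and the cited reference.
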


Lemma \ref{lem3.8} would play a crucial role in verifying \eqref{3.3.7}. To prove Lemma \ref{lem3.8} as follows, it is essential to recall that, by Assumption\eqref{A}, $\Pi$ has a completely monotone density. 

\begin{lema}
\label{lem3.8}
Suppose that $S>0$. Let Assumption \eqref{A} hold. If $b_S^*>0$, then $V^{S\prime}_{b^*_S}$ admits the following expression:
 \begin{gather*}
     V^{S\prime}_{b^*_S}(x)
     =\int_{(0,\infty)}e^{-xz}p_S(z)\xi(dz),
      \text{  for all  } x>b^*_S,
     \tag{3.3.8}
     \label{3.3.8}
 \end{gather*}
 where the function $p_S$ is given by
 \begin{gather*}
     p_S(z)=
     -S\frac{\sigma^2}{2}
     \left(
     z^2
     +\delta \frac{2}{\sigma^2}z
     +\delta z 
     \int_0^{b^*_S}
     e^{zy}W^{(q)\prime\prime}(y)dy
     \right)+\delta e^{b_S^*z}
              \\
     +A_S(b^*_S)
     \left(
     z-
     \delta z
     \int_0^{b^*_S}
     e^{zy}W^{(q)\prime}(y)dy
     \right)
     ,\quad S>0.
     \tag{3.3.9}
     \label{3.3.9}
 \end{gather*}
Also, given that $S>0$ and $b_S^*>0$, $p_S$ is a concave function and
 $V^S_{b^*_S}$ is such that 
 \begin{gather*}
     V_{b^*_S}^{S\prime\prime}(x)\leq 
     e^{(x-b^*_S)\beta}
     V_{b^*_S}^{S\prime\prime}(b^*_S+).
     \tag{3.3.10}
     \label{3.3.10}
 \end{gather*}
 for some point $\beta\in (0,\infty]$ and $x>b^*_S$.
\end{lema}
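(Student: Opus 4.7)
The plan is to derive the integral representation \eqref{3.3.8}, then establish the concavity of $p_S$, and finally use these structural facts to extract the exponential bound \eqref{3.3.10}.

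First, to prove \eqref{3.3.8}--\eqref{3.3.9}, I would start from the two equivalent forms of \eqref{3.3.5}. Since $b_S^*>0$, Proposition \ref{prop3} yields $A_S(b_S^*)=\theta_S(b_S^*)$, so the boundary term in the second form of \eqref{3.3.5} vanishes. I then expand every appearance of $\mathbf{W}^{(q)}$ via Lemma \ref{lem2.0} as $\mathbf{W}^{(q)}(u)=\phi'(q)e^{\phi(q)u}-f(u)$ with $f(u)=\int_{(0,\infty)} e^{-uz}\xi(dz)$, and interchange the order of integration by Fubini. The $x$-dependence then organizes itself into an $e^{\phi(q)x}$ part and an $\int_{(0,\infty)} e^{-xz}(\cdots)\xi(dz)$ part; the coefficient of $e^{\phi(q)x}$ turns out to be proportional, through the defining identity \eqref{3.2.2} of $A_S$, to $A_S(b_S^*)-\theta_S(b_S^*)$, and hence vanishes at $b_S^*$. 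A single integration by parts in the remaining integrand, using $W^{(q)\prime}(0+)=2/\sigma^2$ from \eqref{2.7}, then yields the coefficient $p_S(z)$ in the exact form \eqref{3.3.9}.

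Next, for the concavity of $p_S$, I would differentiate \eqref{3.3.9} twice in $z$. After integrating by parts once in $\int_0^{b_S^*} e^{zy}W^{(q)\prime\prime}(y)\,dy$, the expression for $p_S''(z)$ decomposes into a strictly negative constant contribution of $-S\sigma^2$, exponential terms of the type $e^{b_S^*z}$, and weighted integrals of $W^{(q)\prime}(y)e^{zy}$ over $(0,b_S^*)$. The log-convexity of $W^{(q)\prime}$ from Lemma \ref{lem2.1}, combined with the optimality identity $A_S(b_S^*)W^{(q)\prime}(b_S^*)=1-S\frac{\sigma^2}{2}W^{(q)\prime\prime}(b_S^*)$, should force these contributions to combine into a nonpositive quantity for every $z>0$. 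I expect this step to be the main obstacle of the proof, since the sign of $p_S''$ is not visible from \eqref{3.3.9} and relies on both the log-convexity results of Section \ref{S.2} and the explicit characterization of $b_S^*$.

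Finally, for the bound \eqref{3.3.10}, I would differentiate \eqref{3.3.8} twice in $x$ to obtain $V^{S\prime\prime}_{b_S^*}(x)=-\int_{(0,\infty)} z\, e^{-xz}p_S(z)\,\xi(dz)$. Since $p_S$ is concave with $p_S(0)=\delta>0$ (read off from \eqref{3.3.9} at $z=0$) and the $-\frac{S\sigma^2}{2}z^2$ piece dominates at infinity, $p_S$ admits at most one zero $\beta\in(0,\infty]$ on $(0,\infty)$, with $p_S\geq 0$ on $(0,\beta)$ and $p_S\leq 0$ on $(\beta,\infty)$. Writing $e^{-xz}=e^{-b_S^*z}e^{-(x-b_S^*)z}$ and comparing $e^{-(x-b_S^*)z}$ against the reference $e^{-(x-b_S^*)\beta}$ separately on the two regions, where the sign of $-zp_S(z)$ aligns consistently with the direction of the comparison, I would sum the two resulting estimates to obtain \eqref{3.3.10}.
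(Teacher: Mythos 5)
Your overall architecture matches the paper's: expand $\mathbf{W}^{(q)}$ through the decomposition \eqref{2.5}, collect the $e^{-xz}$ coefficients into $p_S$, and deduce \eqref{3.3.10} from concavity of $p_S$ together with the single sign change of $p_S$ at some $\beta$. One small correction first: the coefficient of $e^{\phi(q)x}$ in the expansion is not ``proportional to $A_S(b_S^*)-\theta_S(b_S^*)$''. After applying \eqref{A.1.4}--\eqref{A.1.5} it equals
$\delta\phi^{\prime}(q)\phi(q)\bigl[A_S(b_S^*)\int_{b_S^*}^{\infty}e^{-\phi(q)y}W^{(q)\prime}(y)dy-\int_{b_S^*}^{\infty}e^{-\phi(q)y}\bigl(1-S\frac{\sigma^2}{2}W^{(q)\prime\prime}(y)\bigr)dy\bigr]$,
which vanishes identically for \emph{every} $b$ by the defining formula \eqref{3.2.2} of $A_S(b)$; the first-order condition $A_S(b_S^*)=\theta_S(b_S^*)$ plays no role in killing that term. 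The conclusion is the same, but the mechanism you cite is not the one at work.

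The substantive gap is the concavity of $p_S$, precisely the step you concede with ``should force these contributions to combine into a nonpositive quantity.'' The ingredients you propose --- log-convexity of $W^{(q)\prime}$ plus the single equality $A_S(b_S^*)W^{(q)\prime}(b_S^*)=1-S\frac{\sigma^2}{2}W^{(q)\prime\prime}(b_S^*)$ --- are not what makes this work: an equality at the one point $b_S^*$ cannot control an integral over all of $(0,b_S^*)$, and log-convexity does not enter at all. The working argument is to compute
\begin{gather*}
p_S^{\prime\prime}(z)=-S\sigma^2-\tfrac{S\sigma^2\delta}{2}\int_0^{b_S^*}e^{zy}(2y+zy^2)W^{(q)\prime\prime}(y)dy+\delta e^{b_S^*z}(b_S^*)^2-A_S(b_S^*)\delta\int_0^{b_S^*}e^{zy}(2y+zy^2)W^{(q)\prime}(y)dy,
\end{gather*}
and then to invoke the \emph{pointwise} inequality $A_S(b_S^*)\geq A_S(y)\geq\theta_S(y)$ for all $y\in[0,b_S^*]$, which follows from the definition \eqref{3.2.15} of $b_S^*$ combined with \eqref{3.2.5} (so that $A_S^{\prime}\geq 0$ and $A_S\geq\theta_S$ on $[0,b_S^*]$). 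This gives $A_S(b_S^*)W^{(q)\prime}(y)\geq 1-S\frac{\sigma^2}{2}W^{(q)\prime\prime}(y)$ under the positive weight $e^{zy}(2y+zy^2)$; the $W^{(q)\prime\prime}$ integrals then cancel exactly, and the elementary identity $\delta\int_0^{b_S^*}e^{zy}(2y+zy^2)dy=\delta(b_S^*)^2e^{b_S^*z}$ absorbs the exponential term, leaving $p_S^{\prime\prime}(z)\leq-S\sigma^2<0$. Without this pointwise use of the characterization of $b_S^*$, your plan stalls at its central claim. (A final small remark: your comparison argument correctly produces $e^{-(x-b_S^*)\beta}$ in the exponent; the sign appearing in \eqref{3.3.10} is a slip carried by the paper itself, and only the consequence $V^{S\prime\prime}_{b_S^*}\leq 0$ on $(b_S^*,\infty)$ is used later.)
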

\begin{proof}
The derivation of the exact expression of $V^{S\prime}_{b_S^*}$ is in Appendix \ref{A.3}.

Let $S>0$. Differentiating \eqref{3.3.9} gives the following identity:
\begin{gather*}
    p_S^{\prime\prime}(z)
    =-S{\sigma^2}
    -\frac{S{\sigma^2}\delta}{2} 
    \int_0^{b^*_S}e^{zy}(2y+zy^2)W^{(q)\prime\prime}(y)dy+
    \delta e^{b^*_Sz}(b^*_S)^2
        \\
    -A_S(b^*_S)\delta \int_0^{b^*_S}e^{zy}(2y+zy^2)W^{(q)\prime}(y)dy.
\end{gather*}
In view of the definition of $b_S^*$ and the property that $A_S^{\prime}(z)\geq 0$ is equivalent to $A_S(z)\geq \theta_S(z)$ from \eqref{3.2.5}, we have $A_S(z)\geq \theta_S(z)$ for $b\in [0,b_S^*]$. Then we obtain 
\begin{gather*}
    p_S^{\prime\prime}(z)\leq 
    -S{\sigma^2}
    -\frac{S{\sigma^2}\delta}{2} 
    \int_0^{b^*_S}e^{zy}(2y+zy^2)W^{(q)\prime\prime}(y)dy
    +
    \delta e^{b^*_Sz}(b^*_S)^2
            \\
    -\delta \int_0^{b^*_S}e^{zy}(2y+zy^2)\left[1-S\frac{\sigma^2}{2}W^{(q)\prime\prime}(y)\right]dy=-S\sigma^2<0,
\end{gather*}
for $z\in [0,b_S^*]$. Obviously, we are able to conclude that $p$ is a concave function when $S>0$. Note that $p_S(0)=\delta>0$ because of \eqref{3.3.9}. Hence there exists a point $\beta\in (0,\infty]$ such that the function $p$ is positive on $(0,\beta)$ and negative $(\beta,\infty)$. Then $e^{-(x-b^*_S)z}p(z)\geq e^{-(x-b^*_S)\beta}p(z)$ holds for all $z>0$. In this way, we can see that
\begin{gather*}
    V_{b^*_S}^{S\prime\prime}(x)=
    -\int_{(0,\infty)}
    e^{-(x-b^*_S)z}e^{-b^*_Sz}zp_S(z)\xi(dz)\leq 
    -
     e^{(x-b^*_S)\beta}
    \int_{(0,\infty)}
   e^{-b^*_Sz}zp_S(z)\xi(dz)
    \\
    =
     e^{(x-b^*_S)\beta}
     V_{b^*_S}^{S\prime\prime}(b^*_S+),
     \text{  for all  }
     x>b_S^*.
     \end{gather*}

Note that a similar argument can be found in Lemma 4.20 of \cite{junca2019optimality} and Lemma 8 of \cite{kyprianou2012optimal}.
\end{proof}

Finally, the following result would make use of the foregoing and help arrive at the optimality of threshold strategies. 

\begin{coro}
Suppose that Assumption \eqref{A} holds. Let $S\in (0, 
 \frac{\frac{\delta}{\phi(q)}-\frac{\sigma^2}{2}}{c-
 \delta 
 +\phi(q)\frac{\sigma^2}{2}})$. Then the value function $V^S_{b^*_S}$ satisfies \eqref{3.3.7} and also \eqref{3.3.1}. Consequently, Theorem \ref{teor1} is valid.
\end{coro}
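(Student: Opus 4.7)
The plan is to run the verification chain already supplied in the paper: by Lemma \ref{lem3.7}, the HJB inequality \eqref{3.3.1} for $V^S_{b^*_S}$ reduces to the two-sided derivative bound \eqref{3.3.7}, and once this is established Lemma \ref{Lem3.6} delivers the optimality claim of Theorem \ref{teor1}. The standing hypothesis on $S$ together with Proposition \ref{prop3} yields $b^*_S>0$ together with the smooth-pasting-type identity $A_S(b^*_S)=\theta_S(b^*_S)$ (by continuity of both functions at the infimum in \eqref{3.2.15}), and the regularity needed to apply Lemma \ref{Lem3.6} is already in hand via Lemma \ref{LEM32AA}.

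For $0<x\leq b^*_S$, I would first exploit the fact that $W^{(q)}$ and $\mathbf{W}^{(q)}$ vanish on $(-\infty,0)$ to collapse each integral $\int_{b^*_S}^x$ and the term $\overline{\mathbf{W}}^{(q)}(x-b^*_S)$ in \eqref{3.2.1} to zero, leaving the compact expression $V^S_{b^*_S}(x)=S\tfrac{\sigma^2}{2}W^{(q)\prime}(x)+A_S(b^*_S)W^{(q)}(x)$. Differentiating and invoking the definition \eqref{3.2.3} of $\theta_S$, the desired inequality $V^{S\prime}_{b^*_S}(x)\geq 1$ becomes the pointwise condition $A_S(b^*_S)\geq \theta_S(x)$. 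Since $A_S(b^*_S)=\theta_S(b^*_S)$ and $b^*_S\leq a^*_S$ by Proposition \ref{prop3}, the monotonicity assertion \eqref{3.2.12} in Lemma \ref{Lem3.4} supplies $\theta_S(x)\leq \theta_S(b^*_S)$ throughout $(0,b^*_S]$, which closes this side of \eqref{3.3.7}; as a by-product the smooth-pasting identity $V^{S\prime}_{b^*_S}(b^*_S)=1$ is produced.

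For $x>b^*_S$ the strategy is to show that $V^{S\prime}_{b^*_S}$ is non-increasing past the threshold, so that smooth pasting forces the upper bound. Evaluating \eqref{3.3.6} at $b^*_S+$, using $A_S(b^*_S)=\theta_S(b^*_S)$ to annihilate the boundary term involving $\mathbf{W}^{(q)\prime}(0+)$, and converting $S\tfrac{\sigma^2}{2}W^{(q)\prime\prime\prime}(b^*_S)$ into $-g_S(b^*_S)W^{(q)\prime\prime}(b^*_S)$ via the definition \eqref{3.2.7} of $g_S$, I expect the clean identity $V^{S\prime\prime}_{b^*_S}(b^*_S+)=W^{(q)\prime\prime}(b^*_S)[\theta_S(b^*_S)-g_S(b^*_S)]$. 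Because $b^*_S\leq a^*_S\leq a^*$, Lemma \ref{lem2.1} gives $W^{(q)\prime\prime}(b^*_S)\leq 0$, while the equivalence \eqref{3.2.14} established inside the proof of Lemma \ref{Lem3.4} forces $\theta_S(b^*_S)\geq g_S(b^*_S)$; hence $V^{S\prime\prime}_{b^*_S}(b^*_S+)\leq 0$. Plugging this back into the bound \eqref{3.3.10} of Lemma \ref{lem3.8} transports the sign across all of $(b^*_S,\infty)$, and one integration starting from $V^{S\prime}_{b^*_S}(b^*_S)=1$ finishes the second half of \eqref{3.3.7}.

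The boundary condition $V^S_{b^*_S}\geq \Lambda_S$ on $(-\infty,0]$ required by Lemma \ref{Lem3.6} is automatic: for $x<0$ ruin is instantaneous at a strictly negative deficit, so both sides vanish, while at $x=0$ the Gaussian part guarantees creeping and $V^S_{b^*_S}(0)=S=\Lambda_S(0)$. I anticipate the principal obstacle to be the sign verification $V^{S\prime\prime}_{b^*_S}(b^*_S+)\leq 0$; the key cancellation is produced by the first-order condition $A_S(b^*_S)=\theta_S(b^*_S)$, while the inequality $\theta_S(b^*_S)\geq g_S(b^*_S)$ relies on the somewhat delicate intersection structure behind Lemma \ref{Lem3.4}, which itself depends on the higher-order boundary values $W^{(q)\prime\prime}(0+)$ and $W^{(q)\prime\prime\prime}(0+)$ recorded in Lemma \ref{Lem2.2} and Lemma \ref{lem2.229}.
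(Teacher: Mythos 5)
Your proposal is correct and follows essentially the same route as the paper: reduce \eqref{3.3.1} to \eqref{3.3.7} via Lemma \ref{lem3.7}, use $A_S(b^*_S)=\theta_S(b^*_S)$ together with the monotonicity of $\theta_S$ on $(0,a^*_S]$ to get $V^{S\prime}_{b^*_S}\geq 1$ below the threshold, and establish $V^{S\prime\prime}_{b^*_S}(b^*_S+)\leq 0$ propagated by \eqref{3.3.10} above it. Your expression $V^{S\prime\prime}_{b^*_S}(b^*_S+)=W^{(q)\prime\prime}(b^*_S)\left[\theta_S(b^*_S)-g_S(b^*_S)\right]$ is, via \eqref{3.2.6}, exactly the paper's $-\theta_S^{\prime}(b^*_S+)W^{(q)\prime}(b^*_S+)$ from \eqref{3.3.13}--\eqref{3.3.15}, so the argument is the same up to an algebraic reformulation.
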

\begin{proof}
\begin{itemize}
    \item The first assertion is to show that \eqref{3.3.7} holds. As a consequence of $b_S^*\leq a_S^*$ in Proposition \ref{prop3} and \eqref{3.2.12}, it is such that 
\begin{gather*}
    \theta_S^{\prime}\geq 0~(\theta_S\text{  is non-decreasing}),\text{  on  }[0,b_S^*]\text{  with  }b_S^*\leq a_S^*.
\tag{3.3.11}
\label{3.3.11}
\end{gather*}
$S\in (0,\frac{\frac{\delta}{\phi(q)}-\frac{\sigma^2}{2}}{c-\delta+\phi(q)\frac{\sigma^2}{2}})$ is satisfied. Observe that $b_S^*>0$ in this case due to Proposition \ref{prop3}. If $x\leq b^*_S$, by using \eqref{3.3.5} and implementing the identity $A_S(b_S^*)=\theta_S(b_S^*)$ because of the definition of $b_S^*$ in \eqref{3.2.15} and the relation \eqref{3.2.5}, we could obtain the expression as follows: 
\begin{gather*}
V^{S\prime}_{b^*_S}(x)
=S\frac{\sigma^2}{2}W^{(q)\prime\prime}(x)
+
A_S (b^*_S)
W^{(q)\prime }(x)
= 
S\frac{\sigma^2}{2}W^{(q)\prime\prime}(x)
+
\theta_S (b^*_S)
W^{(q)\prime }(x)
\\
\geq 
S\frac{\sigma^2}{2}W^{(q)\prime\prime}(x)
+
\theta_S (x)
W^{(q)\prime }(x)
= 1,
\tag{3.3.12}
\label{3.3.12}
\end{gather*}
where the inequality holds due to \eqref{3.3.11}, and the last identity is thanks to the definition of $\theta_S$ in \eqref{3.2.3}. Also, it is easy-to-check that the equality in \eqref{3.3.12} can be achieved when $x=b^*_S$, meaning that $V^{S\prime}_{b^*_S}(b^*_S)=1$. Differentiating $\theta_S$ given in \eqref{3.2.3} presents that 
\begin{gather*}
    -\theta_S^{\prime}(b)W^{(q)\prime}(b)
    =W^{(q)\prime\prime}(b)\theta_S(b)
    +S\frac{\sigma^2}{2}W^{(q)\prime\prime\prime}(b), \text{  for all }b>0.
    \tag{3.3.13}
    \label{3.3.13}
\end{gather*}
Making $x\to b_S^*+$ in \eqref{3.3.6} shows that 
\begin{gather*}
    V^{S\prime\prime}_{b^*_S}(b^*_S+)
    =
    W^{(q)\prime\prime}(b^*_S+)
    A_S(b^*_S+)+S\frac{\sigma^2}{2}W^{(q)\prime\prime\prime}(b^*_S+)
    \\
    =W^{(q)\prime\prime}(b^*_S+)
    \theta_S(b^*_S+)+S\frac{\sigma^2}{2}W^{(q)\prime\prime\prime}(b^*_S+)
    ,
    \tag{3.3.14}
    \label{3.3.14}
\end{gather*}
where the last equality holds owing to the identity that $A_S(b_S^*)=\theta_S(b_S^*)$. By the expression \eqref{3.3.14}, using \eqref{3.3.11}, \eqref{3.3.13}, and the property that $W^{(q)}$ strictly increases ($W^{(q)\prime}>0$) on $(0,\infty)$ implies that 
\begin{gather*}
    V^{S\prime\prime}_{b^*_S}(b^*_S+)
    =
    W^{(q)\prime\prime}(b^*_S+)
    \theta_S(b^*_S+)
    +S\frac{\sigma^2}{2}W^{(q)\prime\prime\prime}(b^*_S+)
    =
    -\theta_S^{\prime}(b^*_S+)W^{(q)\prime}(b^*_S+)
    \leq 0.
    \tag{3.3.15}
    \label{3.3.15}
\end{gather*}
\eqref{3.3.15}, coupled with \eqref{3.3.10} in Lemma \ref{lem3.8}, deduces that $V^{S\prime\prime}_{b^*_S}(x)\leq 0$ for $x> b^*_S$ ($V^{S\prime}_{b^*_S}$ is non-increasing on $(b^*_S,\infty)$). As a result, $V^{S\prime}_{b^*_S}\leq 1$ on $(b^*_S,\infty)$ as $V^{S\prime}_{b^*_S}(b^*_S)=1$ by \eqref{3.3.12}.
\item 
In summary, we have demonstrated that $V^S_{b_S^*}$ indeed satisfies \eqref{3.3.7}, which, by Lemma \ref{lem3.7}, is equivalent to \eqref{3.3.1}. From \eqref{3.2.1}, it can be inferred that $V^S_{b^*_S}$ satisfies that $V^S_{b^*_S}(0)=S$ and $V^S_{b^*_S}(x)=0$ for $x<0$ by using the identity $W^{(q)\prime}(0+)=\frac{2}{\sigma^2}$ from \eqref{2.7} and the fact that $W^{(q)}$ vanishes on $(-\infty,0]$ and that $\mathbf{W}^{(q)}$ vanishes on $(-\infty,0)$. The foregoing, compounded by the smoothness property in Lemma \ref{LEM32AA}, finalizes the verification of the sufficient condition concluding the optimality in Lemma \ref{Lem3.6}. 
\end{itemize}

\end{proof}
\section{A numerical example}
\label{S.4}
\subsection*{Case Study}
Given the drift parameter $\mu\in \mathbb{R}$, the volatility parameter $\sigma>0$, a standard Brownian motion $B=(B_t)_{t\geq 0}$, \text{i.i.d.} exponential random variables $\{J_i\}_{i\geq 1}$ with parameter $p$, and a Poisson process $N=(N_t)_{t\geq 0}$ with arrival rate $\lambda>0$, $X$ is defined as 
 \begin{gather*}
     X_t=\mu t+\sigma B_t-\sum_{i=1}^{N_t}J_i,\quad t\geq 0.
 \end{gather*}
Here, $X$ is a special case of spectrally negative L\'evy processes. The corresponding jumping measure $\Pi$ is with a completely monotone density as 
 $\Pi(dx)=\rho(x)dx=\lambda pe^{-px}dx,\text{  for  }x>0,$ in which $(-1)^n\rho^{(n)}(x)=\lambda p^{n+1}e^{-px}> 0$ for $x>0$ and all positive integers $n$. Furthermore, it is easy to check that $\mu=c$, $\sigma$ corresponds to the parameter of the Gaussian part, and $\Pi(0,\infty)=\lambda$. 
 
We choose the aforesaid parameters in the following manner:
\begin{gather*}
    (\mu,\sigma,\lambda,p,S,q,\delta)=
    (2,1,1,0.5,0.05 ,4,1.8)
    .
    \tag{4.1}
    \label{4.1}
\end{gather*}
It can be verified the choice in \eqref{4.1} allows the hypothesis in Theorem \ref{teor1} to hold. The associated $q$-scale functions $W^{(q)}$ and $\mathbf{W}^{(q)}$ are expressed as 
\begin{gather*}
    W^{(q)}(x)\approx
    -0.264203 e^{-5.76694 x}
    -0.015547 e^{-0.4129 x}
    +0.27975 e^{1.67984 x},
    \\
    \mathbf{W}^{(q)}(x)\approx
    -0.304813 e^{-3.42214 x}
    -0.0199203 e^{-0.4 x}
    +0.324733 e^{2.82214 x}.
\end{gather*}
Based on the software Matlab, it is suggested that 
\begin{gather*}
    b^*_S\approx 0.01993, \quad 
    A_S(b_S^*)=\theta_S(b_S^*)\approx0.6341.
\end{gather*}
In the following graph (Figure 1: Case 1), $A_S$ is the blue line, and $\theta_S$ is the red line. As can be seen, they intersect with each other at the maximum point of $A_S$.
\begin{figure}[h]
    \centering
    \includegraphics[width=0.95\textwidth]{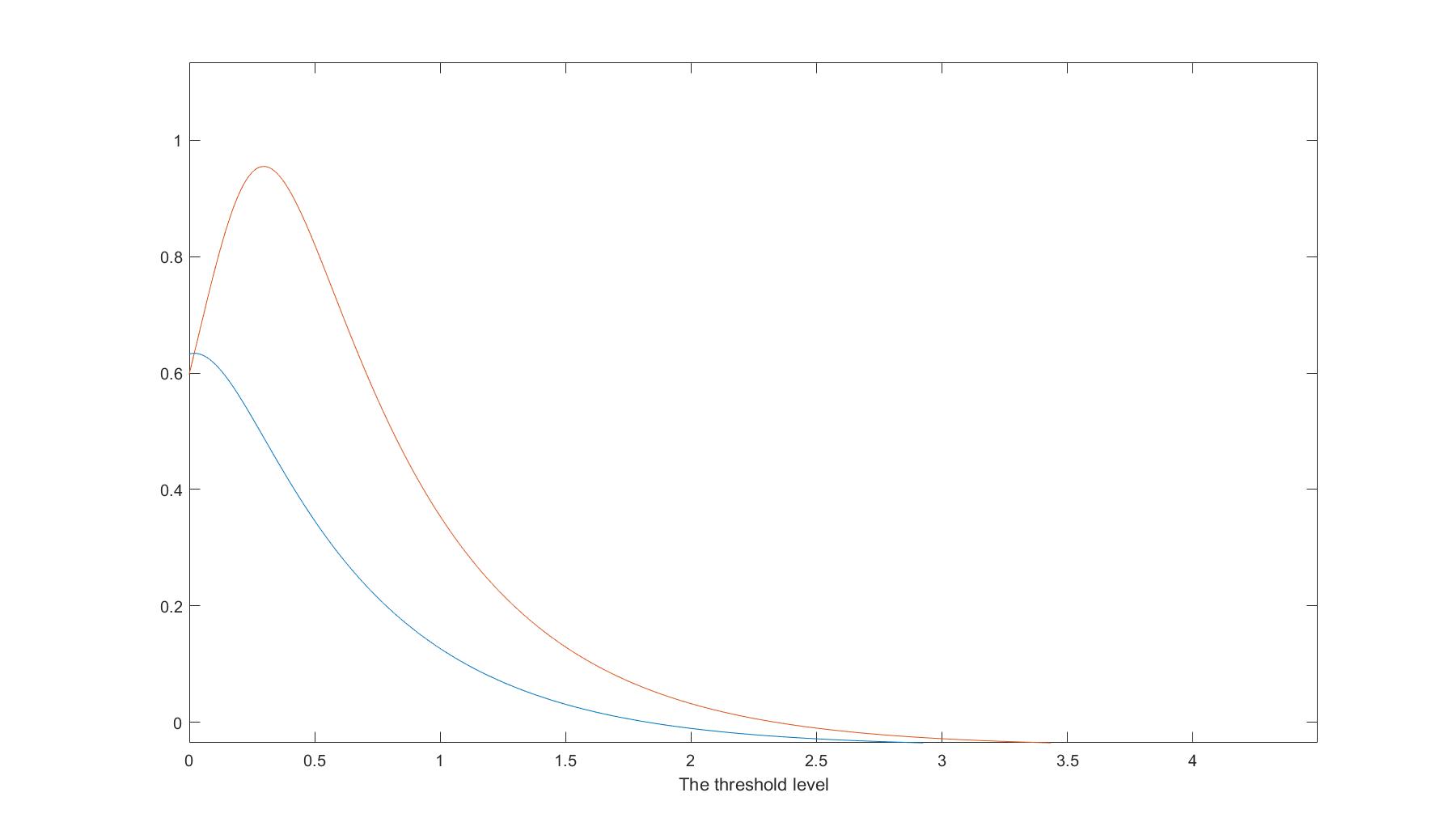}
    \caption{Case 1}
    \label{N1}
\end{figure}

\section{Conclusion}
\label{S.5}
In this research work, we investigate the optimal dividends problem when a positive terminal value at creeping ruin is incurred in the expected net present values considered. The optimality of the threshold strategy is shown in that case. The difficulty of investigating this problem lies in identifying the appropriate sufficient condition and proving that could ensure the usefulness of the rule to select the optimal threshold level, see \cite{junca2019optimality}
and \cite{kyprianou2012optimal}. Also, we offer one numerical example in the positive terminal value case where the surplus process evolves as a Brownian motion with drift and negative exponential jumps and give a visual expression along with it, which shows how the threshold level is chosen.

\section*{Appendix}\label{APPe}
\renewcommand\thesubsection{A.\arabic{subsection}}
\subsection{Proof of Lemma \ref{lem3.8}}
\label{A.3}
Let $S>0$. After differentiating two sides of \eqref{3.3.2} and \eqref{3.3.3}, we obtain that 
\begin{gather*}
W^{(q)\prime}(x)+\delta\int_0^x\mathbf{W}^{(q)\prime}(x-y)W^{(q)\prime}(y)dy=\mathbf{W}^{(q)\prime}(x),
\tag{A.1.1}
\label{A.1.1}
\\
    W^{(q)\prime \prime}(x)
    +\delta\int_0^x
    \mathbf{W}^{(q)\prime}(x-y)W^{(q)\prime\prime}(y)dy
    =
    \mathbf{W}^{(q)\prime\prime}(x)
    -\delta \frac{2}{\sigma^2}
    \mathbf{W}^{(q)\prime}(x).
    \tag{A.1.2}
    \label{A.1.2}
\end{gather*}
Differentiating \eqref{3.2.1}, inserting \eqref{A.1.1} and $\eqref{A.1.2}$ into the obtained expression, and then using the decomposition of $\mathbf{W}^{(q)}$ in \eqref{2.5} entails that 
\begin{gather*}
    V^{S\prime}_{b^*_S}(x)
    =S\frac{\sigma^2}{2}
    \left(
    W^{(q)\prime \prime }(x)
    +\delta \int_{b^*_S}^x
    \textbf{W}^{(q)\prime}(x-y)W^{(q)\prime  \prime}(y)dy
    \right)
    \\
    +
    A_S(b^*_S)
    \left(
    W^{(q)\prime }(x)
    +\delta \int_{b^*_S}^x
    \textbf{W}^{(q)\prime}(x-y)W^{(q) \prime}(y)dy
    \right)
        \\
    -\delta \mathbf{W}^{(q)}(x-b_S^*)
\\
    =S\frac{\sigma^2}{2}
    \left(
    \mathbf{W}^{(q)\prime\prime}(x)
    -\delta \frac{2}{\sigma^2}
    \mathbf{W}^{(q)\prime}(x)
    -\delta 
    \int_0^{b_S^*}
    \mathbf{W}^{(q)\prime}(x-y)W^{(q)\prime\prime}(y)dy
    \right)
    \\
    +
    A_S(b^*_S)
    \left(
    \mathbf{W}^{(q)\prime }(x)
    -\delta \int_0^{b^*_S}
    \textbf{W}^{(q)\prime}(x-y)W^{(q) \prime}(y)dy
    \right)
        \\
    -\delta \mathbf{W}^{(q)}(x-b_S^*)
\\
    =
    S\frac{\sigma^2}{2}
    \Big( 
    \left[ 
    \phi^{\prime}(q)(\phi(q))^2e^{\phi(q)x}
    -f^{\prime\prime}(x)
    \right]
    -
    \delta
    \frac{2}{\sigma^2}
    \left[\phi^{\prime}(q)\phi(q)e^{\phi(q)x}
    -f^{\prime}(x)
    \right]
    \\
    -\delta \int_0^{b^*_S}
    \left[\phi^{\prime}(q)\phi(q)e^{\phi(q)(x-y)}
    -f^{\prime}(x-y)
    \right]
    W^{(q)\prime\prime}(y)dy
    \Big)
    \\
    +
    A_S(b^*_S)
    \Big(
    \left[\phi^{\prime}(q)\phi(q)e^{\phi(q)x}
    -f^{\prime}(x)
    \right]
    -\delta \int_0^{b^*_S}
    \left[\phi^{\prime}(q)\phi(q)e^{\phi(q)(x-y)}
    -f^{\prime}(x-y)
    \right]
    W^{(q)\prime }(y)dy
    \Big)
    \\
    -\delta \Big(
    \phi^{\prime}(q)e^{\phi(q)(x-b_S^*)}
    -f(x-b_S^*)
    \Big)
    .
    \tag{A.1.3}
    \label{A.1.3}
\end{gather*}
Notice that 
\begin{gather*}
    \int_0^{\infty}e^{-\phi(q)y}W^{(q)}(y)dy=\frac{1}{\delta\phi(q)},
    \tag{A.1.4}
    \label{A.1.4}
\end{gather*}
holds due to the definition of $W^{(q)}$ in \eqref{2.1}. Recalling \eqref{2.4} and employing integral by parts once and twice with respect to \eqref{A.1.4} implies that
\begin{gather*}
    \int_0^{\infty}e^{-\phi(q)y}W^{(q)\prime}(y)dy=\frac{1}{\delta},
    \quad 
    \int_0^{\infty}e^{-\phi(q)}W^{(q)\prime\prime}(y)dy
    =\frac{\phi(q)}{\delta}-\frac{2}{\sigma^2}.
    \tag{A.1.5}
    \label{A.1.5}
\end{gather*}

Making use of \eqref{A.1.5} and carefully rearranging the terms in \eqref{A.1.3} gives that  
\begin{gather*}
    V^{S\prime}_{b_S^*}(x)
    =S\frac{\sigma^2}{2}
    \left(
    -f^{\prime\prime}(x)
    +\delta\frac{2}{\sigma^2}
    f^{\prime}(x)
    +\delta\int_0^{b_S^*}
    f^{\prime}(x-y)W^{(q)\prime\prime}(y)dy
    \right )
    \\
    +A_S(b_S^*)
    \left( 
    -f^{\prime}(x)
    +\delta 
    \int_0^{b_S^*}
    f^{\prime}(x-y)W^{(q)\prime}(y)dy
    \right) +\delta f(x-b_S^*)
    \\+
    \delta \phi^{\prime}(q)\phi(q)e^{\phi(q)x}
    \left[ 
    A_S(b_S^*)\int_{b_S^*}^{\infty}
    e^{-\phi(q)y}W^{(q)\prime}(y)dy 
    -\int_{b_S^*}^{\infty}
    e^{-\phi(q)y}
    \left(
    1-S\frac{\sigma^2}{2}
    W^{(q)\prime\prime}(y)
    \right)dy
    \right] 
    \\=S\frac{\sigma^2}{2}
    \left(
    -f^{\prime\prime}(x)
    +\delta\frac{2}{\sigma^2}
    f^{\prime}(x)
    +\delta\int_0^{b_S^*}
    f^{\prime}(x-y)W^{(q)\prime\prime}(y)dy
    \right )
    \\
    +A_S(b_S^*)
    \left( 
    -f^{\prime}(x)
    +\delta 
    \int_0^{b_S^*}
    f^{\prime}(x-y)W^{(q)\prime}(y)dy
    \right) +\delta f(x-b_S^*)
    =\int_{(0,\infty)}
    e^{-xz}p_S(z)\xi(dz),
\end{gather*}
where $f$ is the complete monotone function given in \eqref{2.5} in Lemma \ref{lem2.0}, and the last equality is valid owing to the definition of $A_S$.

\section*{Acknowledgement}
I am truly grateful to the two anonymous referees for giving their precious guiding comments for the work.

\end{document}